\def\ps@pprintTitle{%
	\let\@oddhead\@empty
	\let\@evenhead\@empty
	\def\@oddfoot{\footnotesize\itshape
		{} \hfill}%
	\let\@evenfoot\@oddfoot
}
\newtheorem{teor}{Theorem}
\newtheorem{prop}[teor]{Proposition}
\newtheorem{lemm}[teor]{Lemma}
\theoremstyle{definition}
\newtheorem{es}{Example}
\newtheorem{ess}{Examples}
\begin{document}

\begin{frontmatter}

\title{About a question of Gateva-Ivanova and Cameron on square-free set-theoretic solutions of the Yang-Baxter equation}

\author{Marco Castelli}
\ead{marco.castelli@unisalento.it}

\author{Francesco Catino\corref{cor1}}
\ead{francesco.catino@unisalento.it}

\author{Giuseppina Pinto\corref{cor2}}
\ead{giuseppina.pinto@unisalento.it}

\cortext[cor1]{Corresponding author}

\address{Dipartimento di Matematica e Fisica "Ennio De Giorgi", Universit\`a del Salento, Via Provinciale Lecce-Arnesano, 73100 Lecce, Italy}

%


\begin{abstract}
In this paper, we introduce a new sequence $\bar{N}_m$ to find a new estimation of the cardinality $N_m$ of the minimal involutive square-free solution of level $m$. As an application, using the first values of $\bar{N}_m$, we improve the estimations of $N_m$ obtained by Gateva-Ivanova and Cameron in \cite{GiC11} and by Lebed and Vendramin in \cite{lebed2017homology}. Following the approach of the first part, in the last section we construct several new counterexamples to the Gateva-Ivanova's Conjecture.
\end{abstract}

\begin{keyword}
\texttt{Cycle set\sep set-theoretic solution\sep Yang-Baxter equation}
\MSC[2010] 16T25
\end{keyword}

%
%

\end{frontmatter}


\section{Introduction}
The Yang-Baxter equation is one of the basic equations in mathematical physics. Finding all the solutions of this equation is still an open problem, for this reason Drinfeld \cite{drinfeld1992some} posed the question of finding a particular subclass of these solutions, the so-called  \emph{set-theoretic solution}, i.e. the pair $(X,r)$, where $X$ is a non-empty set and $r:X\times X\to X\times X$  a bijective map, satisfying
$$
r_1r_2r_1 = r_2r_1r_2 ,
$$
where $r_1:= r\times id_X$ and $r_2:= id_X\times r$. 
Recall that, if $\lambda_x:X\to X$ and $\rho_y:X\to X$ are maps such that  
$$
r(x,y) = (\lambda_x(y), \rho_y(x))
$$ 
for all $x,y\in X$, a set-theoretic solution of the Yang-Baxter equation $(X, r)$ is said to be a left [\,right\,] non-degenerate if $\lambda_x\in Sym(X)$ [\,$\rho_x\in Sym(X)$\,] for every $x\in X$ and \textit{non-degenerate} if it is left and right non-degenerate. Moreover a solution is called \emph{involutive} if $r^2=id_{X\times X}$ and \emph{square-free} if $r(x,x)=(x,x)$ for every $x\in X$.
The involutive square-free solutions have received a lot of attention since the work due to Gateva-Ivanova and Van der Bergh \cite{gateva1998semigroups} where they showed that there exist several relations between  square-free set-theoretic solutions of the Yang-Baxter equation and semigroups of I-type, semigroups of skew-polynomial type and Bieberbach groups. In particular, in \cite{gateva2004combinatorial,gateva2008matched} multipermutational square-free solutions of finite order were considered.
Recall that if $(X,r)$ is an involutive non-degenerate solution, it is possible to consider an equivalence relation $\sim$ on $X$ which induces in a classical way an involutive solution  $Ret(X,r):=(X/\sim,\bar{r})$, the so-called \emph{retraction} of $(X,r)$ (for more details see \cite{etingof1998set}). An involutive non-degenerate solution is said to be \textit{retractable} if $|Ret(X)|<|X|$, otherwise it is called \textit{irretractable}. Moreover, an involutive solution is called \emph{multipermutational} of level $m$ if $|Ret^{m-1}(X,r)|>1$ and $|Ret^m(X,r)|=1$, where $Ret^k(X,r)$ is defined inductively as $Ret^k(X,r):=Ret(Ret^{k-1}(X,r))$ for every natural number $k$ greater than $1$.\\
In this context, in 2004 Gateva-Ivanova \cite[Question 2.28]{gateva2004combinatorial} conjectured that every finite square-free involtive non-degenerate solution $(X,r)$ is multipermutational.
Ced\'o, Jespers and Okni\'nski \cite{cedo2010retractability} proved that the conjecture is true if the associated permutation group $\mathcal{G}(X,r)$ is abelian, while some year later Gateva-Ivanova \cite{GiC11} and Ced\'o, Jespers and Okni\'nski \cite{cedo2014braces}, by using different teqniques, showed that if $\mathcal{G}(X,r)$ is abelian, the conjecture is also true without the finiteness of $X$. In 2016, Vendramin \cite{vendramin2016extensions} proved that in full generality the conjecture is false costructing an involutive square-free irretractable solution of cardinality $8$. Some years later, in \cite{bachiller2015family, cacsp2017}, other counterexamples were constructed: even if several examples of irretractable involutive square-free solutions were obtained, the construction of further irretractable square-free solutions still to be very hard. Recently, Vendramin \cite[Problem 19]{vendramin2018skew} formally posed the question of finding other irretractable involutive square-free solutions, emphasizing the research among those having cardinality $9$.\\
On the other hand, because of their links with other algebraic structures, multipermutational square-free involutive solutions have been considered in several papers \cite{GiC11, vendramin2016extensions, cedo2010retractability,cedo2014braces}. In that regard, several methods to construct multipermutational solutions were developed: for example, in \cite{cedo2014braces} Ced\'o, Jespers and Okni\'nski constructed the first family of square-free involutive solutions $X_m$ of level $m$ and abelian associated permutation group. In $2011$, Gateva-Ivanova and Cameron \cite{GiC11} posed the following question:\\
\textbf{Question.}\cite[Open Question 6.13]{GiC11} For each positive integer $m$ denote by $N_m$ the minimal integer so that there exists a square-free involutive multipermutational solution $(X_m,r_m)$ of order $|X_m|=N_m$, and with $mpl(X_m,r_m)=m$. How does $N_m$ depend on $m$?\\
\noindent They showed that $N_m\leq 2^{m-1}+1$ and they noted that for $m\in \{1,2,3\}$ the equality holds. For this reason they conjectured the equality for every $m\in \mathbb{N}$. In 2016, Vendramin \cite[Example 3.2]{vendramin2016extensions} answered in negative sense constructing an involutive square-free solution of cardinality $6$ and multipermutational level $4$. The next year Lebed and Vendramin \cite{lebed2017homology} inspected the involutive finite solutions of small size and they showed that $N_4=6$ and $N_5=8$. Moreover, they considered the relation between two consecutive terms of the succession $N_m$ and they showed that $N_{m+1}\leq 2 N_m$: in this way, since $N_5=8$ they indirectly obtained that $N_m\leq 2^{m-2}$, for every $m>4$.  \\
The goal of this paper is to give a new estimation of $N_m$, introducing a new sequence $\bar{N}_k$, defined as the cardinality of the minimal square-free involutive solution $X$ of multipermutational level $k$, having an automorphism $\alpha$ such that $\sigma_{[k-1]}(x)\neq \sigma_{[k-1]}(\alpha(x))$ for an $x\in X$, where $\sigma_{[n]}$ is the epimorphism from $X$ to $Ret^n(X)$ defined inductively as $\sigma_{[0]}(x):=x$ and $\sigma_{[n]}(x):=\sigma_{\sigma_{[n-1]}(x)}$. In the main result of the paper we will show that 
\begin{equation}\label{disuguaglianza}
N_m\leq \bar{N}_k\cdotp 2^{m-k-1}+1
\end{equation}
for every $k<m$. As an application, working on the first values of the sequence $\bar{N}_k$, we will improve the estimations of $N_m$ due by Gateva-Ivanova and Cameron \cite{GiC11} and Lebed and Vendramin \cite{lebed2017homology}.\\
The main tool of the paper is the algebraic structure of \textit{left cycle sets}, introduced by Rump in \cite{rump2005decomposition} and also considered in several papers (see for example \cite{rump2016quasi,lebed2017homology,vendramin2016extensions,cacsp2017,cacsp2018,cacsp2018quasi,JeP}). Recall that a non-empty set $X$ with a binary operation $\cdot$ is a \emph{left cycle set} if each left multiplication $\sigma_x:X\longrightarrow X,\; y\longmapsto x\cdot y$ is invertible and 
\begin{equation*}\label{cicloide}
(x\cdot y)\cdot (x\cdot z)= (y\cdot x)\cdot (y\cdot z) 
\end{equation*}
for all $x,y,z\in X$.
Moreover, a left cycle set $(X,\cdot)$ is \emph{non-degenerate} if the squaring map $\mathfrak{q}:X\longmapsto X,\; x\mapsto x\cdot x$ is bijective.
Rump proved \cite{rump2005decomposition} that if $(X,\cdot)$ is a non-degenerate left cycle set, the map $r:X\times X\longmapsto X\times X$, defined by $r(x,y)=(\lambda_x(y), \rho_y(x))$, where $\lambda_x(y):=\sigma_x^{-1}(y)$ and $\rho_y(x):=\lambda_x(y)\cdot x$, is a non-degenerate involutive solution of the Yang-Baxter equation. Conversely, if $(X,r)$ is a non-degenerate involutive solution and $\cdotp$ the binary operation given by $x\cdot y:= \lambda_x^{-1}(y)$ for all $x,y\in X$, then $(X,\cdot)$ is a non-degenerate left cycle set. The existence of this bijective correspondence allows to move the study of involutive non-degenerate solutions to non-degenerate left cycle sets. In this context, we prove the inequality (\ref{disuguaglianza}) by a mixture of two well-known extension-tools of left cycle sets: the \textit{one-sided extension} of left cycle sets, developed in terms of set-theoretic solutions \cite{etingof1998set} by Etingof, Schedler and Soloviev, and the \textit{dynamical extension} of left cycle sets developed in \cite{vendramin2016extensions} by Vendramin.\\
In the last section we will see that the same approach is useful to construct further interesting examples of left cycle sets: referring to \cite[Problem 19]{vendramin2018skew}, we provide several counterexamples to the Gateva-Ivanova's Conjecture, in addition to those obtained in \cite{vendramin2016extensions,bachiller2015family,cacsp2017}.


\section{Some preliminary results}

A non-empty set $X$ with a binary operation $\cdot$ is a \emph{left cycle set} if the left multiplication $\sigma_x:X\longrightarrow X,\; y\longmapsto x\cdot y$ is invertible and 
\begin{equation}\label{cicloide}
(x\cdot y)\cdot (x\cdot z)= (y\cdot x)\cdot (y\cdot z) 
\end{equation}
for all $x,y,z\in X$. 
A left cycle set $(X,\cdot)$ is \emph{non-degenerate} if the squaring map $\mathfrak{q}:X\longmapsto X,\; x\mapsto x\cdot x$ is bijective.
Rump proved \cite{rump2005decomposition} that if $(X,\cdot)$ is a left cycle set, the map $r:X\times X\to X\times X$ given by $r(x,y)=(\lambda_x(y),\rho_y(x))$, where $\lambda_x(y):=\sigma^{-1}_x(y)$ and $\rho_y(x):=\lambda_x(y)\cdot x$ is a non-degenerate involutive solution of the Yang-Baxter equation. Conversly, if $(X,r)$ is a non-degenerate solution, the binary operation $\cdot$ defined by $x\cdot y:=\lambda^{-1}_x(y)$ for all $x,y\in X$ makes $X$ into a left cycle set. The existence of this correspondence allows to move the study of involutive non degenerate solutions to left cycle sets, as recently done in \cite{vendramin2016extensions, cacsp2017, cacsp2018, lebed2017homology, lebed2016cohomology, rump2016quasi, catino2015construction, cacsp2018quasi}, and clearly to translate in terms of left cycle set the classical concepts related to the non-degenerate involutive set-theoretic solutions.\\
Therefore, a left cycle set is said to be \emph{square-free} if the squaring-map $\mathfrak{q}$ is the identity on $X$. 
The image $\sigma(X)$ of the map $\sigma:X\longrightarrow Sym(X),\; x\mapsto \sigma_x$ can be endowed with an induced binary operation 
$$ 
\sigma_x\cdot\sigma_y:=\sigma_{x\cdot y}
$$
which satisfies (\ref{cicloide}). Rump \cite{rump2005decomposition} showed that $(\sigma(X),\cdot)$ is a non-degenerate left cycle set if and only if $(X,\cdot)$ is non-degenerate. The left cycle set $\sigma(X)$ is called the \emph{retraction} of $(X,\cdot)$.\\
The left cycle set $(X,\cdot)$ is said to be \emph{irretractable} if $(\sigma(X),\cdot)$ is isomorphic to $(X,\cdot)$, otherwise it is called \emph{retractable}.\\
A non-degenerate left cycle set $(X,\cdot)$ is said to be \emph{multipermutational of level $m$}, if $m$ is the minimal non-negative integer such that $\sigma^m(X)$ has cardinality one, where 
$$
\sigma^0(X):= X \quad \textrm{and}\quad \sigma^n(X):=\sigma(\sigma^{n-1}(X)),\quad \textrm{for}\quad n\geq 1 .
$$
In this case we write $mpl(X)=m$.
Obviously, a multipermutational left cycle set is retractable but the converse is not necessarly true.\\
From now on, by a left cycle set we mean a non-degenerate left cycle set.
The permutation group $\mathcal{G}(X)$ of $X$ is the subgroup of $Sym(X)$ generated by the image $\sigma(X)$ of $\sigma$.\\
In order to construct new examples of left cycle sets, Vendramin \cite{vendramin2016extensions} introduced the concept of \emph{dynamical cocycle}.
If $I$ is a left cycle set and $S$ a non-empty set, then   $\alpha:I\times I\times S\longrightarrow Sym(S)$, $(i,j,s)\mapsto \alpha_{i,j}(s,-)$ is a dynamical cocycle if 
\begin{equation}
\alpha_{i\cdot j,i\cdot k}(\alpha_{i,j}(r,s),\alpha_{i,k}(r,t))=\alpha_{j\cdot i,j\cdot k}(\alpha_{j,i}(s,r),\alpha_{j,k}(s,t))
\end{equation}
for all $i,j,k\in I$, $r,s,t\in S$. Moreover, if $\alpha$ is a dynamical cocycle, then the left cycle set $S\times_{\alpha} I:=(S\times I,\cdot)$, where
\begin{equation}
(s,i)\cdot (t,j):=(\alpha_{ij}(s,t),i\cdot j)
\end{equation}
for all $i,j\in I$, $s,t\in S$, is called \emph{dynamical extension} of $I$ by $\alpha$. \\
\noindent A dynamical cocycle $\alpha:I\times I\times S\longrightarrow Sym(S)$ is said to be \textit{constant} \cite{vendramin2016extensions} if $\alpha_{(i,j)}(r,-)=\alpha_{(i,j)}(s,-)$ for all $i,j\in I$, $r,s\in S$. An example of a constant dynamical cocycle, extensively used in \cite{lebed2017homology} because of its simplicity to compute, is the following:

\begin{es}\label{esimp}
Let $S$ be a finite abelian group, $X$ a left cycle set and $f$ a function from $X\times X$ to $S$ such that 
\begin{center}
$f(i,k)+f(i\cdotp j,i\cdotp k)=f(j,k)+f(j\cdotp i,j\cdotp k)$
\end{center}
for all $i,j,k\in X$.
Then, $\alpha:X\times X\times S\longrightarrow Sym(S)$ given by
\begin{center}
$\alpha_{(i,j)}(s,t):=t+f(i,j)$,
\end{center}
for all $i,j\in X$ and $s,t\in S$, is a constant dynamical cocycle.  
\end{es}

\begin{es}\label{esimp2}
Let $X$ be a left cycle set, $k$ a natural number and $S:=\mathbb{Z}/k\mathbb{Z}$. Let $\alpha:X\times X\times S\longrightarrow Sym(S)$ be the function given by
$$
\alpha_{(i,j)}(s,t):=\begin{cases} t & \mbox{if } i=j\\
t+1 & \mbox{if }i\neq j
\end{cases}
$$
Then, $\alpha$ is a constant dynamical cocycle and so $S\times_{\alpha} X$ is a left cycle set.
\end{es}
An important family of dynamical extensions was obtained by Bachiller, Ced\'o, Jespers and Okni\'nski.

\begin{prop}[\cite{bachiller2015family}, Section 2]\label{constrBACH}
Let $A$ and $B$ be non-trivial abelian groups and let $I$ be a set with $|I|>1$. Let $\varphi_1:A\longrightarrow B$ be a function such that $\varphi_1(-a)=\varphi_1(a)$ for every $a\in A$ and let $\varphi_2:B\longrightarrow A$ be a homomorphism. On $X(A,B,I):= A\times B\times I$ we define the following operation  
\begin{center}
$(a,b,i)\cdotp (c,d,j):= \begin{cases} (c,d-\varphi_1(a-c),j), & \mbox{if }i= j \\ (c-\varphi_2(b),d,j), & \mbox{if }\mbox{ }i\neq j\end{cases}$
\end{center}
for all $a,c\in A$, $b,d\in B$ and $i,j\in I$. Then $(X(A,B,I),\cdot)$ is a non-degenerate left cycle set.
\end{prop}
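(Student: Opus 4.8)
The plan is to verify directly the three defining conditions of a non-degenerate left cycle set for the operation on $X(A,B,I)$: that every left multiplication $\sigma_{(a,b,i)}$ is a bijection, that the cycle set identity (\ref{cicloide}) holds, and that the squaring map $\mathfrak{q}$ is bijective. Throughout I write $x=(a,b,i)$, $y=(c,d,j)$, $z=(e,f,k)$. A useful preliminary observation is that the third coordinate of a product $(a,b,i)\cdot(c,d,j)$ is always $j$; hence the projection $\pi\colon X(A,B,I)\to I$ is compatible with the operation, where $I$ carries the trivial cycle set structure $i\cdot j:=j$. This already forces the $I$-coordinates on the two sides of (\ref{cicloide}) to agree, so the whole verification reduces to the $A$- and $B$-coordinates.

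The bijectivity of $\sigma_{(a,b,i)}$ and of $\mathfrak{q}$ are the easy parts. Fixing $(a,b,i)$, the map $\sigma_{(a,b,i)}$ preserves the $I$-coordinate of its argument: on the fiber $j=i$ it acts by $(c,d)\mapsto(c,d-\varphi_1(a-c))$, and on each fiber $j\neq i$ by $(c,d)\mapsto(c-\varphi_2(b),d)$. Each of these is a bijection of $A\times B$, being a translation in one coordinate after fixing the other, so $\sigma_{(a,b,i)}$ is invertible. For non-degeneracy one computes $\mathfrak{q}(a,b,i)=(a,b,i)\cdot(a,b,i)=(a,b-\varphi_1(0),i)$, which is the translation $(a,b,i)\mapsto(a,b-\varphi_1(0),i)$ and is manifestly bijective, with inverse $(a,b,i)\mapsto(a,b+\varphi_1(0),i)$.

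The substantial part, and the main obstacle, is the cycle set identity, which I would prove by a case analysis according to the pairwise relations among $i,j,k$. Since the two outer products in (\ref{cicloide}) compare $\pi(x\cdot y)=j$ with $\pi(x\cdot z)=k$ on the left and $\pi(y\cdot x)=i$ with $\pi(y\cdot z)=k$ on the right, the relevant data are the three comparisons $i\,{?}\,j$, $i\,{?}\,k$, $j\,{?}\,k$, which by transitivity leave exactly five consistent configurations: $i=j=k$; the three in which exactly two of them coincide; and $i,j,k$ pairwise distinct. In each configuration I would expand both sides by the defining formula and compare the $A$- and $B$-coordinates. The configurations $i=j=k$ and $i,j,k$ distinct close using only the commutativity of $A$ and $B$. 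The delicate configurations are those with exactly two indices equal, where an inner product feeds an expression of the form $d-\varphi_1(a-c)$ into $\varphi_2$, and equality of the two sides reduces to an identity such as $\varphi_2(\varphi_1(a-c))=\varphi_2(\varphi_1(c-a))$. This is precisely where the two hypotheses enter: the homomorphism property of $\varphi_2$ allows the splitting $\varphi_2(d-\varphi_1(a-c))=\varphi_2(d)-\varphi_2(\varphi_1(a-c))$, and the evenness $\varphi_1(-a)=\varphi_1(a)$ gives $\varphi_1(a-c)=\varphi_1(c-a)$, so that the residual terms cancel. Organizing the bookkeeping so that each property is invoked in the right coordinate at the right moment is the only genuine difficulty; once the five cases are laid out, each reduces to a short computation.
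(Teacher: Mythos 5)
Your verification is correct: the paper itself gives no proof of Proposition~\ref{constrBACH} (it is imported from \cite{bachiller2015family}), and your direct check is exactly the standard argument one would write, with the five index configurations correctly enumerated and the two hypotheses ($\varphi_2$ a homomorphism, $\varphi_1(-a)=\varphi_1(a)$) invoked precisely where they are needed, namely in the configuration $i=j\neq k$ where $\varphi_2(d-\varphi_1(a-c))$ must be compared with $\varphi_2(b-\varphi_1(c-a))$. The only minor imprecision is that of the three ``exactly two equal'' cases only $i=j\neq k$ is actually delicate --- the cases $i=k\neq j$ and $j=k\neq i$ close by direct expansion without either hypothesis --- but this does not affect the correctness of the plan.
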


Recently, we constructed a large family of dynamical extensions that includes the one obtained by Bachiller et al.

\begin{prop}[\cite{cacsp2017}, Theorem 2]\label{costrfinale2}
Let $A,B$ be a non-empty sets, $I$ a non-degenerate left cycle set and $$\beta:A\times A\times I\longrightarrow Sym(B),\quad\gamma:B\longrightarrow Sym(A).$$ 
\vspace{-2mm}
Put $\beta_{(a,c,i)}:=\beta(a,c,i)$, $\gamma_b:=\gamma(b)$ for $a,c\in A$ and $b\in B$. Assume that 
\vspace{1.5mm}
\begin{enumerate}
\item[1)]$\gamma_b\gamma_d=\gamma_d\gamma_b$,

\item[2)] $\beta_{(a,c,i)}=\beta_{(\gamma_b(a),\gamma_b(c),j\cdotp i)}$,

\item[3)]$\gamma_{\beta_{(a,c,i)}(d)}\gamma_{b}=\gamma_{\beta_{(c,a,i)}(b)}\gamma_{d}$,

\item[4)]$\beta_{(a,c,i\cdotp i)}\beta_{(a',c,i)}= \beta_{(a',c,i\cdotp i)}\beta_{(a,c,i)}$
\end{enumerate}

hold for all $a, a', c\in A$, $b,d\in B$ and $i,j\in I$, $i\neq j$.\\
Let $\cdotp$ be the operation on $A\times B\times I$ defined by:
\begin{equation}
\label{opteor}
(a,b,i)\cdotp (c,d,j) := \begin{cases} (c,\; \beta_{(a,c,i)}(d),\;i\cdotp j), & \mbox{if }i=j \\(\gamma_b(c),\;d,\;i\cdotp j), & \mbox{if }i\neq j\end{cases}.
\end{equation}
Then $X(A,B,I,\beta,\gamma):=(A\times B\times I, \cdotp)$ is a non-degenerate left cycle set.
\end{prop}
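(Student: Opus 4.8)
The plan is to verify directly the three requirements for $(A\times B\times I,\cdot)$ to be a non-degenerate left cycle set: that every left multiplication $\sigma_{(a,b,i)}$ is bijective, that the cycle set identity (\ref{cicloide}) holds, and that the squaring map $\mathfrak{q}$ is bijective. Throughout I would exploit that $I$ is itself a non-degenerate left cycle set, so each $\sigma_i:I\to I$, $j\mapsto i\cdot j$ is a bijection and the squaring map $j\mapsto j\cdot j$ on $I$ is a bijection. The decisive structural remark, used repeatedly, is that the third coordinate of a product $(a,b,i)\cdot(c,d,j)$ equals $i\cdot j$, and that injectivity of $\sigma_i$ means two such products over a fixed first factor land in distinct $I$-fibres exactly when their second factors carry distinct third coordinate; this is what lets me decide, at each stage of a nested product, which of the two branches in (\ref{opteor}) is in force.

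For the bijectivity of $\sigma_{(a,b,i)}$ I would argue fibrewise over the third coordinate. Applied to $(c,d,j)$, the map sends the fibre $j=i$ into the fibre $i\cdot i$ via $(c,d)\mapsto(c,\beta_{(a,c,i)}(d))$, which is a bijection of $A\times B$ because each $\beta_{(a,c,i)}\in Sym(B)$; and it sends each fibre $j\neq i$ into the fibre $i\cdot j$ via $(c,d)\mapsto(\gamma_b(c),d)$, a bijection of $A\times B$ since $\gamma_b\in Sym(A)$. As $\sigma_i$ is injective these target fibres are pairwise distinct, so $\sigma_{(a,b,i)}$ is a bijection; this step uses none of the hypotheses 1)--4).

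The heart of the proof is the identity $(x\cdot y)\cdot(x\cdot z)=(y\cdot x)\cdot(y\cdot z)$, which I would establish by a case analysis on the equality pattern of the third coordinates $i,j,k$ of $x,y,z$. In every case the third coordinates of the two sides agree, since they reduce to $(i\cdot j)\cdot(i\cdot k)=(j\cdot i)\cdot(j\cdot k)$, i.e. the cycle set identity of $I$. The first and second coordinates are then matched case by case: the case $i=j=k$ collapses to hypothesis 4), the case $i=j\neq k$ to hypothesis 3), the cases $i=k\neq j$ and $j=k\neq i$ to hypothesis 2) (applied with the substitution forced by $i\neq j$), and the case $i,j,k$ pairwise distinct to the commutativity hypothesis 1). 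The main obstacle here is purely organisational: each nested product forces a decision between the two branches of (\ref{opteor}), and one must check that the branch dictated by the $I$-fibres is consistent throughout. Injectivity of $\sigma_i$ is exactly what guarantees this and keeps the bookkeeping under control.

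Finally, for non-degeneracy I would compute, via the $i=j$ branch, $\mathfrak{q}(a,b,i)=(a,b,i)\cdot(a,b,i)=(a,\beta_{(a,a,i)}(b),i\cdot i)$. Since the squaring map $i\mapsto i\cdot i$ of $I$ is a bijection, the pair $(a,i\cdot i)$ recovers both $a$ and $i$; and as $\beta_{(a,a,i)}\in Sym(B)$, the middle coordinate then recovers $b$ uniquely. Hence $\mathfrak{q}$ is bijective and $X(A,B,I,\beta,\gamma)$ is a non-degenerate left cycle set.
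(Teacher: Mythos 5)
Your proof is correct and complete: the fibrewise argument for bijectivity of each $\sigma_{(a,b,i)}$, the five-case analysis according to the equality pattern of the third coordinates (with the cases $i=j=k$, $i=j\neq k$, $i=k\neq j$ or $j=k\neq i$, and $i,j,k$ pairwise distinct reducing respectively to hypotheses 4), 3), 2), and 1), and the injectivity of $\sigma_i$ correctly deciding the branch of \eqref{opteor} at each nested product), and the computation $\mathfrak{q}(a,b,i)=(a,\beta_{(a,a,i)}(b),i\cdot i)$ all check out. Note that the paper states this proposition without proof, importing it from \cite{cacsp2017}; your direct verification is the standard argument for such extension results and there is nothing in this document to compare it against.
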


\section{Left cycle sets and automorphisms}
Before being able to prove our main result in the next section, some preliminary results are requested. At first, one of the needed tool is the concept of automorphism of left cycle sets.
If $X$ is a left cycle set, an element $\alpha\in Sym(X)$ is an \emph{automorphism} of $X$ if $\alpha(x\cdot y)=\alpha(x)\cdot \alpha(y)$ for all $x, y\in X$. \\
In this section, we want to show the importance of the automorphisms group in the study of the left cycle sets. At this purpose we will see that the automorphisms of a left cycle set are useful to construct further examples of left cycle sets and to understand the structure of particular families of left cycle sets. Moreover it is natural ask which is the automorphism group $Aut(X)$ of a given left cycle set $X$. For example, if $X$ is the left cycle set given by $x\cdotp y:=y$ for all $x,y\in X$, then $Aut(X)=Sym(X)$. At this stage of studies, facing this problem in the general case seems to be very hard. However, the following two propositions are useful to find some automorphisms of particular left cycle sets.
\begin{prop}\label{autret}
Let $X$ be a left cycle set and $\alpha\in Aut(X)$. Consider the retraction $\sigma(X)$ and let $\bar{\alpha}:\sigma(X)\longrightarrow \sigma(X)$ be the function given by $\bar{\alpha}(\sigma_x):=\sigma_{\alpha(x)}$ for every $x\in X$. Then $\bar{\alpha}\in Aut(\sigma(X))$.
\end{prop}

\begin{proof}
If $x,y\in X$, then 
\begin{eqnarray}
\sigma_x=\sigma_y &\Leftrightarrow & \forall z\in X\quad  x\cdotp z=y\cdotp z\nonumber \\
&\Leftrightarrow &\forall z\in X \quad \alpha(x)\cdotp \alpha(z)=\alpha(y)\cdotp \alpha(z) \nonumber\\ 
&\Leftrightarrow &\forall z\in X \quad \sigma_{\alpha(x)}=\sigma_{\alpha(y)} \nonumber
\end{eqnarray}
hence $\bar{\alpha}$ is well-defined and injective. With a straightforward calculation, it is possible to show that $\bar{\alpha}$ is an epimorphism. 
\end{proof}

\begin{prop}\label{automor1}
Let $X$ be a left cycle set, $f\in Aut(X)$ and set inductively $X_0:=X$ and $X_m$ the left cycle set on $X_{m-1}\times \mathbb{Z}/2\mathbb{Z}$ given by
$$
(x,s)\cdotp (y,t):=\begin{cases}
(x\cdotp y,t) & \mbox{if }x=y\mbox{ } \\
(x\cdotp y,t+1) & \mbox{if }x\neq y\mbox{ }  
\end{cases}
$$
for all $m\in \mathbb{N}$. Then, the function $f_m:X_m \longrightarrow X_m$ given by $f_m(x,s_1,...,s_m):=(f(x),s_1,...,s_m)$ for every $(x,s_1,...,s_m)\in X_m$ is an automorphism of $X_m$ for every $m\in \mathbb{N}$.
\end{prop}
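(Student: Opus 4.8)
The plan is to argue by induction on $m$, exploiting the recursive structure $X_m = X_{m-1}\times \mathbb{Z}/2\mathbb{Z}$. For the base case $m=0$ one has $X_0=X$ and $f_0=f$, which is an automorphism by hypothesis. For the inductive step I would assume $f_{m-1}\in Aut(X_{m-1})$ and write a generic element of $X_m$ as $(u,s)$ with $u=(x,s_1,\dots,s_{m-1})\in X_{m-1}$ and $s\in\mathbb{Z}/2\mathbb{Z}$. Under this identification the map reads $f_m(u,s)=(f_{m-1}(u),s)$, since $f_m$ acts as $f$ on the first entry and fixes every $\mathbb{Z}/2\mathbb{Z}$-entry.

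Since $f_m$ is $f$ on the first coordinate and the identity on the remaining ones, it is a bijection, so the only thing to verify is that it is a homomorphism. Taking $(u,s),(v,t)\in X_m$, the defining operation gives $(u,s)\cdot(v,t)=(u\cdot v,\,\varepsilon)$ with $\varepsilon=t$ if $u=v$ and $\varepsilon=t+1$ otherwise, whence $f_m\big((u,s)\cdot(v,t)\big)=\big(f_{m-1}(u\cdot v),\,\varepsilon\big)$. On the other side, $f_m(u,s)\cdot f_m(v,t)=(f_{m-1}(u),s)\cdot(f_{m-1}(v),t)=\big(f_{m-1}(u)\cdot f_{m-1}(v),\,\varepsilon'\big)$, where $\varepsilon'=t$ if $f_{m-1}(u)=f_{m-1}(v)$ and $\varepsilon'=t+1$ otherwise.

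It then suffices to compare the two coordinates separately. The first coordinates coincide directly by the inductive hypothesis, which says $f_{m-1}$ is a homomorphism, giving $f_{m-1}(u\cdot v)=f_{m-1}(u)\cdot f_{m-1}(v)$. For the second coordinate the decisive — and essentially only — point is that $f_{m-1}$ is \emph{injective}, so that $u=v$ holds if and only if $f_{m-1}(u)=f_{m-1}(v)$; this forces $\varepsilon=\varepsilon'$. The two products therefore agree, $f_m$ is an automorphism of $X_m$, and the induction closes. I do not expect a genuine obstacle here: the argument rests entirely on the fact that an automorphism is a bijection and hence both preserves and reflects the equality of first coordinates that governs the case split in the operation. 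It is worth noting that this is exactly where the hypothesis $f\in Aut(X)$ (rather than merely an endomorphism) is used, since for a non-injective $f$ the second-coordinate match could fail.
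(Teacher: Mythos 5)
Your proof is correct and follows essentially the same route as the paper: induction on $m$, with the second coordinate handled by the observation that injectivity of $f_{m-1}$ makes $u=v$ equivalent to $f_{m-1}(u)=f_{m-1}(v)$, exactly as the paper's computation with $\delta_{x,y}=\delta_{f(x),f(y)}$. Your version is if anything cleaner, since packaging $X_m$ as $X_{m-1}\times\mathbb{Z}/2\mathbb{Z}$ and writing $f_m(u,s)=(f_{m-1}(u),s)$ avoids the long coordinate-by-coordinate expansion in the paper's inductive step.
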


\begin{proof} We prove the thesis by induction on $m$. If $m=1$ we have
\begin{eqnarray}
f_1((x,s)\cdotp (y,t))&=&f_1(x\cdotp y,t+1-\delta_{x,y}) \nonumber \\
&=&(f(x)\cdotp f(y),t+1-\delta_{f(x),f(y)}) \nonumber \\
&=& (f(x),s)\cdotp (f(y),t) \nonumber \\
&=& f_1(x,s)\cdotp f_1(y,t) \nonumber
\end{eqnarray}
for all $(x,s),(y,t)\in X\times \mathbb{Z}/2\mathbb{Z}=X_1$. Now, let us assume the thesis true for a natural number $m$. Then,
\begin{eqnarray}
& & f_{m+1}((x,s_1,...,s_{m+1})\cdotp (y,t_1,...,t_{m+1})) \nonumber \\
&=&f_{m+1}((x,s_1,...,s_m)\cdotp (y,t_1,...,t_m),t_{m+1}+1-\delta_{(x,s_1,...,s_m),(y,t_1,...,t_m)}) \nonumber \\
&=&f_{m+1}(y,t_1,...,t_m+1-\delta_{(x,s_1,...,s_{m-1}),(y,t_1,...,t_{m-1})},t_{m+1}+1-\delta_{(x,s_1,...,s_m),(y,t_1,...,t_m)}) \nonumber \\
&=&(f(y),t_1,...,t_m+1-\delta_{(x,s_1,...,s_{m-1}),(y,t_1,...,t_{m-1})},t_{m+1}+1-\delta_{(x,s_1,...,s_m),(y,t_1,...,t_m)}) \nonumber \\
&=&(f_m((x,s_1,...,s_m)\cdotp (y,t_1,...,t_m)),t_{m+1}+1-\delta_{f_m(x,s_1,...,s_m),f_m(y,t_1,...,t_m)}) \nonumber \\
&=&(f_m(x,s_1,...,s_m)\cdotp f_m(y,t_1,...,t_m),t_{m+1}+1-\delta_{f_m(x,s_1,...,s_m),f_m(y,t_1,...,t_m)}) \nonumber \\
&=&(f_m(x,s_1,...,s_m),s_{m+1})\cdotp (f_m(y,t_1,...,t_m),t_{m+1})	\nonumber \\
&=&(f(x),s_1,...,s_m,s_{m+1})\cdotp (f(y),t_1,...,t_m,t_{m+1})\nonumber \\
&=& f_{m+1}(x,s_1,...,s_{m+1})\cdotp f_{m+1}(y,t_1,...,t_{m+1}) \nonumber 
\end{eqnarray}
hence $f_{m+1}\in Aut(X_{m+1})$.

\end{proof}

Gateva-Ivanova and Cameron \citep{GiC11} and Etingof et al. \cite{etingof1998set} showed that automorphisms of left cycle sets allows to construct other examples of left cycle sets. We prove that, if $X$ is a left cycle set, $\alpha\in Aut(X)$ and $z\notin X$, then, under suitable hypothesis, the retraction $\sigma(X\cup \{z\})$ is isomorphic to the left cycle set having the disjoint union  $\sigma(X)\cup \{\alpha\}$ as underlying set.
\begin{prop}\label{perteo}
Let $X$ be a left cycle set, $\alpha\in Aut(X)$, $z\notin X$ and $(X\cup \{z\},\circ)$ the algebraic structure given by
$$
x\circ y:=\begin{cases}x\cdotp y & \mbox{if }x,y\in X\mbox{ } \\y & \mbox{if }y=z\mbox{ } \\
	 \alpha(y) & \mbox{if }y\in X,x=z.\mbox{ }
	\end{cases}
$$
Then the pair $(X\cup \{z\},\circ)$ is a left cycle set.\\
Moreover, suppose that $\alpha\neq \sigma_x$ for all $x\in X$. Then the retraction $\sigma(X\cup \{z\})$ is isomorphic to the left cycle set $(\sigma(X)\cup \{\sigma_z\},\circ)$ given by
$$
\sigma_x\circ \sigma_y:=\begin{cases}\sigma_x\cdotp \sigma_y & \mbox{if }\sigma_x,\sigma_y\in X\mbox{ } \\\sigma_y & \mbox{if }y=z\mbox{ } \\
	 \sigma_{\alpha(y)} & \mbox{if }y\in X,x=z.\mbox{ }
	\end{cases}
$$
\end{prop}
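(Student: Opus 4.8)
The plan is to split the proposition into its two assertions and handle them in order. For the first assertion, I must check that $(X\cup\{z\},\circ)$ is a non-degenerate left cycle set: namely that every left multiplication $\sigma_x^{\circ}$ is invertible and that the cycle-set identity \eqref{cicloide} holds for all triples of elements. First I would compute the left multiplications explicitly. Reading off the definition, $\sigma_z^{\circ}=\alpha$ as a map on $X$ (fixing, in effect, the role of the last clause), while for $x\in X$ the map $\sigma_x^{\circ}$ restricts to $\sigma_x$ on $X$ and sends $z\mapsto z$; since $\alpha\in Sym(X)$ and each $\sigma_x\in Sym(X)$, every $\sigma_x^{\circ}$ is a bijection of $X\cup\{z\}$, giving invertibility. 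Non-degeneracy follows by checking the squaring map is bijective: $z\circ z=z$ and $x\circ x=x\cdot x$ for $x\in X$, so $\mathfrak{q}^{\circ}$ extends the original bijective $\mathfrak{q}$ by fixing $z$.

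The heart of the first part is verifying \eqref{cicloide}, i.e. $(a\circ b)\circ(a\circ c)=(b\circ a)\circ(b\circ c)$, by a case analysis on which of $a,b,c$ equal $z$. When all three lie in $X$ the identity is just the cycle-set axiom for $(X,\cdot)$. The genuinely new cases are those involving $z$, and here the key computation is the one where $z$ appears as a left factor: evaluating $\sigma_z^{\circ}=\alpha$ and using that $\alpha$ is an \emph{automorphism} of $(X,\cdot)$ is exactly what makes the two sides agree. For instance, with $a=z$ and $b,c\in X$ one side produces an expression in $\alpha(b),\alpha(c)$ and the other an expression that, after applying $\alpha(b\cdot c)=\alpha(b)\cdot\alpha(c)$, matches; the cases where $z$ sits in the $b$ or $c$ slot collapse because $x\circ z=z$ makes one of the inner terms equal to $z$, and $z$ is fixed on the right. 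I expect the bookkeeping of these subcases to be the most error-prone part, but each reduces either to the automorphism property of $\alpha$ or to the original axiom, so none is conceptually hard.

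For the second assertion I must identify the retraction $\sigma(X\cup\{z\})$, under the hypothesis $\alpha\neq\sigma_x$ for all $x\in X$. The plan is to produce the natural candidate bijection between the set of left multiplications of $(X\cup\{z\},\circ)$ and $\sigma(X)\cup\{\sigma_z\}$, sending $\sigma_x^{\circ}\mapsto\sigma_x$ for $x\in X$ and $\sigma_z^{\circ}\mapsto\sigma_z$. The hypothesis $\alpha\neq\sigma_x$ is precisely what guarantees that $\sigma_z^{\circ}$ is a genuinely new element: since $\sigma_z^{\circ}=\alpha$ acts as $\alpha$ on $X$ and fixes $z$, it coincides with some $\sigma_x^{\circ}$ only if $\alpha=\sigma_x$ on $X$, which is excluded. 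I would also check that distinct $\sigma_x,\sigma_y$ (with $x,y\in X$) remain distinct as $\sigma_x^{\circ},\sigma_y^{\circ}$, which is immediate from their action on $X$. This establishes that the map is a well-defined bijection.

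Finally I would verify it is a cycle-set homomorphism, i.e. that it intertwines the induced operation $\sigma_a^{\circ}\cdot\sigma_b^{\circ}=\sigma_{a\circ b}^{\circ}$ on $\sigma(X\cup\{z\})$ with the displayed operation on $\sigma(X)\cup\{\sigma_z\}$. Here the three clauses of the target operation correspond exactly to the three clauses defining $\circ$: for $a,b\in X$ one uses $\sigma_{a\circ b}^{\circ}=\sigma_{a\cdot b}^{\circ}\mapsto\sigma_{a\cdot b}=\sigma_a\cdot\sigma_b$; for $b=z$ one uses $a\circ z=z$, giving $\sigma_z$; and for $a=z,\,b\in X$ one uses $z\circ b=\alpha(b)$, giving $\sigma_{\alpha(b)}$. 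The main obstacle overall is simply keeping the case analysis in \eqref{cicloide} organized and correctly invoking the automorphism identity at the right moment; once that is done, the retraction statement is a clean transport of the operation through the bijection.
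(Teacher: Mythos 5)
Your plan is correct and follows essentially the same route as the paper: for the second assertion both you and the authors build the natural bijection $\sigma_x^{\circ}\mapsto\sigma_x$, use the hypothesis $\alpha\neq\sigma_x$ only to see that $\sigma_z^{\circ}$ is a new element, and check clause by clause that the bijection intertwines the operations. The only difference is that for the first assertion the paper simply cites the one-sided extension of Etingof, Schedler and Soloviev, whereas you verify the cycle-set identity directly; your case analysis is sound (the only nontrivial case is $z$ in the leftmost slot, where $\alpha(b\cdot c)=\alpha(b)\cdot\alpha(c)$ is exactly what is needed, and all cases with $z$ in the third slot collapse since both sides equal $z$).
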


\begin{proof}
By \cite[Section 2]{etingof1998set}, $(X\cup \{z\},\circ)$ is a left cycle set. Now, since $\alpha\neq \sigma_x$, for every $x\in X$, there is a natural bijection $\phi$ from $\sigma(X\cup \{z\})$ to the disjoint union $\sigma(X)\cup \{\sigma_z\}$ given by $\phi(\sigma_x):=\sigma'_x$, for every $\sigma_x\in \sigma(X\cup \{z\})$, where $\sigma'_x$ and $\sigma_x$ are the left multiplications in $\sigma(X)\cup \{\sigma_z\}$ and $\sigma(X\cup \{z\})$ respectively. It is easy to see that $\phi$ is a homomorphism, so the thesis follows.
\end{proof}

\section{A new estimation of $N_m$}
 
The goal of this section is to provide an estimation of $N_m$ depending on another sequence, which we denote by $\bar{N}_k$, that will allow us to improve the estimation obtained by Lebed and Vendramin \citep{lebed2017homology}.\\
Following Gateva-Ivanova and Cameron \citep{GiC11}, if $X$ is a left cycle set and $n$ a natural number, we indicate by $\sigma_{[n]}$ the epimorphism from $X$ to $\sigma^n(X)$ defined inductively by 
$$\sigma_{[0]}(x):=x\qquad  \sigma_{[n]}(x):=\sigma_{\sigma_{[n-1]}(x)}$$
for all $n\in\mathbb{N}$ and $x\in X$.
The following Lemma, due to Gateva-Ivanova and Cameron, involves the function $\sigma_{[n]}$ and it will be useful in our work.

\begin{lemm}[\cite{GiC11}, Proposition 7.8(3)]\label{gainv} Let $X$ be a finite square-free left cycle set of multipermutational level $k$. Then, the sets $\sigma_{[k-1]}^{-1}(x)$ are $\mathcal{G}(X)$-invariant.
\end{lemm}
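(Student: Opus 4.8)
The plan is to reduce the assertion to checking it on the generators $\sigma_a$ ($a\in X$) of $\mathcal{G}(X)$, and to exploit the fact that at level $k$ the last nontrivial retraction is as degenerate as possible. First I would record that each epimorphism $\sigma_{[n]}\colon X\to \sigma^n(X)$ is in fact a homomorphism of left cycle sets. This follows by induction on $n$: the map $\sigma_{[0]}$ is the identity, and since $\sigma_{[n]}(x)=\sigma_{\sigma_{[n-1]}(x)}$ is the composite of $\sigma_{[n-1]}$ with the canonical map $u\mapsto\sigma_u$, which satisfies $\sigma_{u\cdot v}=\sigma_u\cdot\sigma_v$ by the very definition of the operation on $\sigma^{n-1}(X)$, the inductive hypothesis gives
\[
\sigma_{[n]}(x\cdot y)=\sigma_{\sigma_{[n-1]}(x\cdot y)}=\sigma_{\sigma_{[n-1]}(x)\cdot\sigma_{[n-1]}(y)}=\sigma_{[n]}(x)\cdot\sigma_{[n]}(y).
\]

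The crucial step is to pin down the structure of $Y:=\sigma^{k-1}(X)$. Retraction preserves square-freeness, since $\sigma_x\cdot\sigma_x=\sigma_{x\cdot x}=\sigma_x$, so $Y$ is again square-free; and because $mpl(X)=k$, its own retraction $\sigma(Y)=\sigma^k(X)$ is a singleton, which means that all left multiplications in $Y$ coincide. Combining these two facts forces $Y$ to be the trivial cycle set: for $u,w\in Y$ one has $u\cdot w=\sigma_u(w)=\sigma_w(w)=w\cdot w=w$, using first that all left multiplications in $Y$ agree and then square-freeness. Hence $v\cdot w=w$ holds identically in $Y$.

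With these two ingredients the invariance is short. For $a,y\in X$, the homomorphism property followed by triviality of $Y$ yields
\[
\sigma_{[k-1]}(a\cdot y)=\sigma_{[k-1]}(a)\cdot\sigma_{[k-1]}(y)=\sigma_{[k-1]}(y),
\]
so each left multiplication $\sigma_a$ sends the fiber $\sigma_{[k-1]}^{-1}(x)$ into itself. Here is where finiteness enters: since $\sigma_a\in Sym(X)$ is a bijection and $X$ is finite, an injection of the finite fiber into itself is a bijection of that fiber, so $\sigma_a$ maps each fiber \emph{onto} itself. The permutations of $X$ fixing every block of the partition $\{\sigma_{[k-1]}^{-1}(x)\}$ form a subgroup of $Sym(X)$; as it contains all generators $\sigma_a$, it contains $\mathcal{G}(X)$, which is exactly the claimed invariance. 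The only point that needs genuine care is the degenerate-structure step, namely justifying that square-freeness together with a one-element retraction forces the trivial operation on $Y$; the homomorphism property and the passage from generators to the whole group are purely formal.
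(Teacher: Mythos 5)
Your proof is correct. Note, however, that the paper does not prove this statement at all: it is imported verbatim as Lemma~\ref{gainv} with a citation to Gateva-Ivanova and Cameron, so there is no in-paper argument to compare against. Your route is the natural self-contained one: show that $\sigma_{[n]}$ is a cycle-set morphism, observe that $\sigma^{k-1}(X)$ is square-free with one-element retraction and hence carries the trivial operation $u\cdot w=w$, and deduce $\sigma_{[k-1]}(a\cdot y)=\sigma_{[k-1]}(y)$, which says precisely that every generator $\sigma_a$ of $\mathcal{G}(X)$ preserves each fiber. All three steps check out against the paper's conventions ($\sigma_x(y)=x\cdot y$, $\sigma_x\cdot\sigma_y=\sigma_{x\cdot y}$, $\mathcal{G}(X)=\langle\sigma(X)\rangle$). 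One small remark: finiteness is not really where the ``onto'' part comes from --- since the fibers partition $X$ and $\sigma_a$ is a surjection of $X$ mapping each block into itself, it must map each block onto itself even for infinite $X$; your finiteness argument is of course also valid under the lemma's hypotheses.
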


We indicate by $\bar{N}_k$ the cardinality of the minimal square-free left cycle set $X$ of level $k$ having an automorphism $\alpha$ such that there exists $x\in X$ with $\sigma_{[k-1]}(x)\neq \sigma_{[k-1]}(\alpha(x))$. For example, let $X:=\{a,b\}$ be the left cycle of level $1$ given by $\sigma_a=\sigma_b:=id_X$ and put $\alpha:=(a\;b)$. Then, $\alpha\in Aut(X)$ and $\sigma_{[0]}(a)\neq \sigma_{[0]}(\alpha(a))$, therefore $\bar{N}_1=2$.

In order to prove the main result of the paper, we need some preliminary results.

\begin{lemm}
Let $X$ be a square-free left cycle set of level $k$, $\{z\}$ a set with a single element such that $z\notin X$ and $\alpha \in Aut(X)$ such that there exists $x\in X$ with $\sigma_{[k-1]}(x)\neq \sigma_{[k-1]}(\alpha(x))$. Then, the left cycle set $(X\cup \{z\},\circ)$ given by
$$
x\circ y:=\begin{cases}x\cdotp y & \mbox{if }x,y\in X\mbox{ } \\y & \mbox{if }y=z\mbox{ } \\
	 \alpha(y) & \mbox{if }y\in X,x=z.\mbox{ }
	\end{cases}
$$
has level $k+1$.
\end{lemm}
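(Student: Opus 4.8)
I would argue by induction on the level $k$, the governing idea being that Proposition~\ref{perteo} identifies the retraction of $(X\cup\{z\},\circ)$ with a one-point extension of exactly the same shape, so that the construction reproduces itself one level down. \emph{First} I would verify the hypothesis that lets me invoke the second (isomorphism) part of Proposition~\ref{perteo}, namely that $\alpha\neq\sigma_w$ for every $w\in X$. Here I would use that each $\sigma_{[n]}$ is a homomorphism of left cycle sets (being a composition of retraction maps, so $\sigma_{[k-1]}(u\cdot v)=\sigma_{[k-1]}(u)\cdot\sigma_{[k-1]}(v)$) and that $\sigma^{k-1}(X)$, being square-free with trivial retraction, is the trivial cycle set $a\cdot b=b$. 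If $\alpha=\sigma_w$ for some $w$, then for every $y\in X$
\[
\sigma_{[k-1]}(\alpha(y))=\sigma_{[k-1]}(w\cdot y)=\sigma_{[k-1]}(w)\cdot\sigma_{[k-1]}(y)=\sigma_{[k-1]}(y),
\]
contradicting $\sigma_{[k-1]}(x)\neq\sigma_{[k-1]}(\alpha(x))$. Hence $\alpha\neq\sigma_w$ for all $w$, and Proposition~\ref{perteo} yields both that $(X\cup\{z\},\circ)$ is a left cycle set and that $\sigma(X\cup\{z\})\cong(\sigma(X)\cup\{\sigma_z\},\circ)$.

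\emph{Next} I would check that the hypotheses descend to $\sigma(X)$. Reading the operation in Proposition~\ref{perteo} and using $\sigma_z\circ\sigma_y=\sigma_{\alpha(y)}=\bar\alpha(\sigma_y)$, the cycle set $(\sigma(X)\cup\{\sigma_z\},\circ)$ is precisely the one-point extension of $\sigma(X)$ by the automorphism $\bar\alpha\in Aut(\sigma(X))$ furnished by Proposition~\ref{autret}, with new point $\sigma_z$. Now $\sigma(X)$ is again square-free (retraction preserves $x\cdot x=x$) and has level $k-1$, and I would confirm that the separating condition persists: taking the witness $\sigma_x$ and using $\sigma^{\sigma(X)}_{[k-2]}(\sigma_u)=\sigma^{X}_{[k-1]}(u)$,
\[
\sigma^{\sigma(X)}_{[k-2]}(\bar\alpha(\sigma_x))=\sigma^{X}_{[k-1]}(\alpha(x))\neq\sigma^{X}_{[k-1]}(x)=\sigma^{\sigma(X)}_{[k-2]}(\sigma_x).
\]
Thus the inductive hypothesis applies to $(\sigma(X),\bar\alpha)$ with witness $\sigma_x$, giving that $(\sigma(X)\cup\{\sigma_z\},\circ)$ has level $k$.

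\emph{Finally} I would assemble the level count. Since $X$ has level $k\geq 1$ it is retractable, so $|\sigma(X)|<|X|$ and hence $|\sigma(X\cup\{z\})|=|\sigma(X)|+1<|X|+1=|X\cup\{z\}|$; therefore $X\cup\{z\}$ is retractable and $mpl(X\cup\{z\})=1+mpl(\sigma(X\cup\{z\}))=1+k$. For the base case $k=1$ the set $X$ is the trivial cycle set, $\sigma(X)$ is a single point, and the one-point extension $\sigma(X)\cup\{\sigma_z\}$ is the trivial two-element cycle set, of level $1$; this gives $mpl(X\cup\{z\})=2$ and anchors the induction.

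The main obstacle is the first step: one must exclude $\alpha=\sigma_w$, for otherwise $z$ and $w$ would be identified under retraction and the level would collapse rather than increase. This is exactly where the hypothesis $\sigma_{[k-1]}(x)\neq\sigma_{[k-1]}(\alpha(x))$ is indispensable, and the very same computation, transported through Proposition~\ref{autret}, is what I would use to show the separating condition is inherited by $(\sigma(X),\bar\alpha)$, keeping the induction alive at each level.
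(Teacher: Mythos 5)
Your proof is correct and follows essentially the same route as the paper: induction on $k$, using Proposition~\ref{perteo} to identify $\sigma(X\cup\{z\})$ with the one-point extension of $\sigma(X)$ by $\bar\alpha$ from Proposition~\ref{autret}, and transporting the separating condition via $\sigma_{[k-2]}(\sigma_u)=\sigma_{[k-1]}(u)$. The only difference is cosmetic: where the paper invokes Lemma~\ref{gainv} (the $\mathcal{G}(X)$-invariance of the fibres of $\sigma_{[k-1]}$) to rule out $\alpha=\sigma_w$, you reprove that special case directly from the homomorphism property of $\sigma_{[k-1]}$ and the triviality of the top retraction, which is a valid, self-contained substitute.
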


\begin{proof}
We prove the thesis by induction on $k$. If $k=1$, then $x\cdotp y=y$ for all $x,y\in X$ and there exist $x,y\in X$ such that $x\neq y$ and $\alpha(x)=y$. This implies that for the left cycle set $(X\cup \{z\},\circ)$ we have that $\sigma_x=id_{X\cup \{z\}}$ for every $x\in X$ and $\sigma_z=\alpha\neq id_{X\cup \{z\}}$, hence $|\sigma(X\cup \{z\})|=2$ and $mpl(X\cup \{z\})=2$.\\
Now, if $X$ has level $k$, then $mpl(X\cup \{z\},\circ)=1+mpl(\sigma(X\cup \{z\}))$ and, by Lemma \ref{gainv}, $\sigma_z\neq \sigma_x$ for every $x\in X$. Hence, by Proposition \ref{perteo}, $\sigma(X\cup \{z\})$ is isomorphic to the left cycle set $(\sigma(X)\cup \{\sigma_z\},\circ)$ given by
$$
\sigma_x\circ \sigma_y:=\begin{cases}\sigma_x\cdotp \sigma_y & \mbox{if }\sigma_x,\sigma_y\in X\mbox{ } \\\sigma_y & \mbox{if }y=z\mbox{ } \\
	 \sigma_{\alpha(y)} & \mbox{if }y\in X,x=z.\mbox{ }
	\end{cases}
$$
Moreover, by Proposition \ref{autret}, we have that $\sigma_{\sigma_z}$ is an element ot $Aut(\sigma(X))$.
If $x$ and $y$ are elements of $X$ such that $\sigma_{[k-1]}(x)\neq \sigma_{[k-1]}(y)$ and $\alpha(x)=y$, it follows that 
$$\sigma_{\sigma_z}(\sigma_x)=\sigma_{z\cdotp x}=\sigma_{\alpha(x)}=\sigma_y$$ 
and 
$$\sigma_{[k-2]}(\sigma_x)=\sigma_{[k-1]}(x)\neq \sigma_{[k-1]}(y)=\sigma_{[k-2]}(\sigma_y)$$
hence we can apply the inductive hypothesis. Therefore, 
$$mpl(X\cup \{z\},\circ)=1+mpl(\sigma(X)\cup \{\sigma_z\})=1+(k-1+1)=k+1$$
and the thesis follows.
\end{proof}

\begin{lemm}\label{stima}
Let $X$ be a square-free left cycle set of level $k$, $\{z\}$ a set with a single element $z\notin X$ and $\alpha \in Aut(X)$ such that there exists $x\in X$ with $\sigma_{[k-1]}(x)\neq \sigma_{[k-1]}(\alpha(x))$. Moreover, set inductively $X_0=Z_0:=X$, $X_m$ the left cycle set on $X_{m-1}\times \mathbb{Z}/2\mathbb{Z}$ given by
$$
(x,s)\cdotp (y,t):=\begin{cases}
(x\cdotp y,t) & \mbox{if }x=y\mbox{ } \\
(x\cdotp y,t+1) & \mbox{if }x\neq y\mbox{ }  
\end{cases}
$$
for all $x,y\in X_{m-1}$ and $s,t\in \mathbb{Z}/2\mathbb{Z}$ and $(Z_m,\circ)$ the algebraic structure given by $Z_m:=X_{m-1}\cup \{z\}$ and
$$
x\circ y:=\begin{cases}x\cdotp y & \mbox{if }x,y\in X_{m-1}\mbox{ } \\y & \mbox{if }y=z\mbox{ } \\
	 (\alpha(y_0),...,y_{m-1}) & \mbox{if }y=(y_0,...,y_{m-1})\in X_{m-1},x=z.\mbox{ }
	\end{cases}
$$
for every $m\in \mathbb{N}$. Then $Z_m$ is a square-free left cycle set of level $k+m$.
\end{lemm}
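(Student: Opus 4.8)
The plan is to argue by induction on $m$, splitting the work so that the two constructions play complementary roles. Note first that $X_m$ is precisely the dynamical extension $\mathbb{Z}/2\mathbb{Z}\times_\alpha X_{m-1}$ of Example \ref{esimp2}, hence is automatically a left cycle set; it is square-free since $(x,s)\cdot(x,s)=(x\cdot x,s)=(x,s)$ whenever $X_{m-1}$ is square-free. On the other hand $Z_m=X_{m-1}\cup\{z\}$ is exactly the single-element extension of $X_{m-1}$ by the automorphism $f_{m-1}$ of Proposition \ref{automor1} (with $f=\alpha$), which I will write $\alpha_{m-1}$. The previous Lemma already raises the level by one under such an extension, provided the separating hypothesis holds at the top retraction level. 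So the entire problem reduces to checking that $X_{m-1}$ inherits from $X$ both the right level and this separating datum, and then quoting the previous Lemma.

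The technical heart is the retraction of the doubling step. A direct computation with the operation shows that $\sigma^{X_m}_{(w,s)}=\sigma^{X_m}_{(w',s')}$ holds if and only if $w=w'$ in $X_{m-1}$: the first coordinate forces $\sigma_w=\sigma_{w'}$, while evaluating the $\mathbb{Z}/2\mathbb{Z}$-coordinate at the argument $w'=w$ forces $w=w'$. Consequently the projection $\rho\colon(w,s)\mapsto w$ induces an isomorphism of left cycle sets $\sigma(X_m)\cong X_{m-1}$ (the induced operation is $w\cdot w'$). In particular $mpl(X_m)=1+mpl(X_{m-1})$, giving level $k+m$ by induction, and, crucially, $\sigma(X_m)$ is $X_{m-1}$ itself rather than a further retract, so that $\sigma^{X_m}_{[n]}(w)$ corresponds under this identification to $\sigma^{X_{m-1}}_{[n-1]}(\rho(w))$.

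With this in hand I set up the inductive invariant $P(m)$: the set $X_m$ is square-free of level $k+m$, one has $\alpha_m\in Aut(X_m)$ by Proposition \ref{automor1}, and the element $w_m:=(x_0,0,\dots,0)$, where $x_0\in X$ is the hypothesized separating element, satisfies $\sigma^{X_m}_{[k+m-1]}(w_m)\neq\sigma^{X_m}_{[k+m-1]}(\alpha_m(w_m))$. The base case $P(0)$ is the hypothesis on $(X,\alpha)$. For the step, $\rho$ commutes with the automorphisms, $\rho\circ\alpha_m=\alpha_{m-1}\circ\rho$, and $\rho(w_m)=w_{m-1}$, $\rho(\alpha_m(w_m))=\alpha_{m-1}(w_{m-1})$; hence under $\sigma(X_m)\cong X_{m-1}$ the two values $\sigma^{X_m}_{[k+m-1]}(w_m)$ and $\sigma^{X_m}_{[k+m-1]}(\alpha_m(w_m))$ correspond bijectively to $\sigma^{X_{m-1}}_{[k+m-2]}(w_{m-1})$ and $\sigma^{X_{m-1}}_{[k+m-2]}(\alpha_{m-1}(w_{m-1}))$, which differ by $P(m-1)$. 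Since the identification is a bijection, the inequality transports, yielding $P(m)$.

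Finally I conclude by applying the previous Lemma to $X_{m-1}$: by $P(m-1)$ it is square-free of level $k+m-1$, carries $\alpha_{m-1}\in Aut(X_{m-1})$, and its element $w_{m-1}$ separates at the top level $\sigma_{[k+m-2]}$, so $Z_m=X_{m-1}\cup\{z\}$ has level $(k+m-1)+1=k+m$; square-freeness of $Z_m$ is immediate, as $w\circ w=w\cdot w=w$ for $w\in X_{m-1}$ and $z\circ z=z$. The one step requiring genuine care is the retraction computation of the doubling construction, namely that $\sigma(X_m)$ is $X_{m-1}$ itself and that $\rho$ realizes the one-step retraction; this is what guarantees the doubling delays retraction by exactly one level and keeps the separating data visible at the very top, so that the hypotheses of the previous Lemma are met at each stage. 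Everything else is routine bookkeeping with $\sigma_{[\,\cdot\,]}$.
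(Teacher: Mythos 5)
Your argument is correct, and it organizes the induction differently from the paper. The paper inducts directly on the full objects $Z_m$: it shows $\sigma(Z_{m+1})=\sigma(X_m\cup\{z\})\cong\sigma(X_m)\cup\{\sigma_z\}\cong X_{m-1}\cup\{z\}=Z_m$ (via Proposition \ref{perteo} and the Lebed--Vendramin identification $\sigma(X_m)\cong X_{m-1}$), and concludes $mpl(Z_{m+1})=1+mpl(Z_m)$, with the base case $m=1$ handled by the preceding unnumbered lemma. You instead induct on the doubled sets $X_m$ with an explicit invariant (square-free, level $k+m$, automorphism $\alpha_m$, and a separating element at the top retraction level), and then obtain each $Z_m$ by a single application of the preceding lemma to $X_{m-1}$. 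Both routes rest on the same key computation, $\sigma(X_m)\cong X_{m-1}$ realized by the projection. What your version buys is that the hypotheses of the preceding lemma are verified wholesale for $X_{m-1}$ at every stage; in particular your propagation of the separating condition through the doubling is precisely what justifies, via Lemma \ref{gainv}, that $\sigma_z\neq\sigma_x$ for all $x\in X_m$ --- a condition needed to invoke Proposition \ref{perteo} at level $m$ and which the paper's inductive step leaves implicit. What the paper's version buys is slightly lighter bookkeeping: it never needs to name the separating element inside $X_m$ beyond the base case, since retracting $Z_{m+1}$ lands back on $Z_m$ directly. Your identification $\sigma^{X_m}_{[n]}(w)\leftrightarrow\sigma^{X_{m-1}}_{[n-1]}(\rho(w))$ and the intertwining $\rho\circ\alpha_m=\alpha_{m-1}\circ\rho$ are both correct and suffice to transport the inequality.
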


\begin{proof}
By Propositions \ref{automor1} and \ref{perteo}, the pair $(Z_m,\circ)$ is a square-free left cycle set.
By induction on $m$ we prove that $(Z_m,\circ)$ has level $k+m$. If $m=1$, the thesis follows by the previous proposition. Now, let us suppose the thesis true for a natural number $m$. Since 
$$mpl(Z_{m+1})=1+mpl(\sigma(Z_{m+1}))=1+mpl(\sigma(X_m\cup \{z\}))$$
and, by Propositions \ref{automor1} and \ref{perteo}, $\sigma(X_m\cup \{z\})$ is isomorphic to the left cycle set $(\sigma(X_m)\cup \{\sigma_z\},\circ)$ given by
$$
\sigma_x\circ \sigma_y:=\begin{cases}\sigma_x\cdotp \sigma_y & \mbox{if }\sigma_x,\sigma_y\in X_m\mbox{ } \\\sigma_y & \mbox{if }y=z\mbox{ } \\
	 \sigma_{\alpha(y)} & \mbox{if }y\in X_m,x=z.\mbox{ }
	\end{cases}
$$
it follows that $mpl(Z_{m+1})= 1+mpl(\sigma(X_m)\cup \{\sigma_z\})$. Finally, by \cite[Theorem 10.6 and Corollary 10.7]{lebed2017homology}, $\sigma(X_m)$ is isomorphic to $X_{m-1}$, and so we obtain that $\sigma(X_m)\cup \{\sigma_z\}$ is isomorphic to $Z_m$. By the inductive hypothesis, we have that 
$$mpl(Z_{m+1})=1+mpl(Z_m)=1+k+m, $$
hence the thesis.
\end{proof}

\begin{teor}\label{teorprinci}
Let $N_m$ be the cardinality of the minimal square-free left cycle set of level $m$ and $\bar{N}_k$ the cardinality of the minimal square-free left cycle set $X$ of level $k$ such that there exists an automorphism $\alpha$ and an element $x\in X$ with $\sigma_{[k-1]}(x)\neq \sigma_{[k-1]}(\alpha(x))$. Then, the inequality
\begin{equation}\label{disugprinci}
N_m\leq \bar{N}_k\cdotp 2^{m-k-1}+1
\end{equation}
holds for every $k<m$.
\end{teor}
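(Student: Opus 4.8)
The plan is to read the inequality off directly from Lemma~\ref{stima} by choosing an optimal starting cycle set and then counting cardinalities through the iterated doubling construction. Since $k<m$, the quantity $m-k$ is a positive integer, so the construction of Lemma~\ref{stima} can be run for exactly $m-k$ steps.

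First I would pick a square-free left cycle set $X$ of level $k$ that realizes $\bar{N}_k$: that is, $X$ carries an automorphism $\alpha$ and contains an element $x$ with $\sigma_{[k-1]}(x)\neq\sigma_{[k-1]}(\alpha(x))$, and $|X|=\bar{N}_k$. Such an $X$ exists by the very definition of $\bar{N}_k$, and it satisfies precisely the hypotheses required to feed it into Lemma~\ref{stima}. Next I would apply Lemma~\ref{stima} to this $X$, to $\alpha$, and to a fresh point $z\notin X$, taking the parameter equal to $m-k$. The lemma guarantees that the resulting structure $Z_{m-k}$ is a square-free left cycle set whose multipermutational level is $k+(m-k)=m$. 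It therefore belongs to the class whose minimal cardinality defines $N_m$, so $N_m\le|Z_{m-k}|$.

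It remains to compute $|Z_{m-k}|$. In the notation of Lemma~\ref{stima} one has $X_0=X$ and $X_j$ is built on $X_{j-1}\times\mathbb{Z}/2\mathbb{Z}$, so $|X_j|=2^j|X|=2^j\bar{N}_k$ by an immediate induction; and $Z_{m-k}=X_{m-k-1}\cup\{z\}$, whence
\begin{equation*}
|Z_{m-k}|=|X_{m-k-1}|+1=\bar{N}_k\cdot 2^{m-k-1}+1 .
\end{equation*}
Combining this with $N_m\le|Z_{m-k}|$ yields the claimed bound~(\ref{disugprinci}).

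The genuine content of the argument sits in the preceding lemmas — keeping the level increasing by exactly one at each doubling step, and showing via Lemma~\ref{gainv} and Proposition~\ref{perteo} that the adjoined point $z$ never collapses under retraction — so that here the only real tasks, and the main things to get right, are the bookkeeping of cardinalities through the iterated doubling and the verification that the chosen $X$ legitimately witnesses $\bar{N}_k$ and hence meets the hypotheses of Lemma~\ref{stima}. I do not expect any serious obstacle at the level of Theorem~\ref{teorprinci} itself, since it is essentially a counting corollary of Lemma~\ref{stima}.
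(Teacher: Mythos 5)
Your proposal is correct and follows essentially the same route as the paper: take a minimal witness $X$ for $\bar{N}_k$, feed it into Lemma~\ref{stima} with parameter $m-k$, and count $|Z_{m-k}|=|X_{m-k-1}|+1=\bar{N}_k\cdot 2^{m-k-1}+1$. The paper merely phrases the step with a free parameter $r$ and then substitutes $m:=r+k$, which is the same computation.
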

\begin{proof}
If $r$ is a natural number and $X$ is a left cycle set of level $k$ and cardinality $\bar{N}_k$ having an automorphism $\alpha$ such that there exists $x\in X$ with $\sigma_{[k-1]}(x)\neq \sigma_{[k-1]}(\alpha(x))$, then the left cycle set $Z_r$, constructed as in the previous Lemma, is a square-free left cycle set of level $k+r$ and cardinality $\bar{N}_k\cdotp 2^{r-1}+1$, hence $N_{r+k}\leq  \bar{N}_k\cdotp 2^{r-1}+1$. Setting $m:=r+k$, we obtain
$$N_{m}\leq  \bar{N}_k\cdotp 2^{m-k-1}+1 $$
hence the thesis.
\end{proof}

\section{Some examples and comments}

The goal of this section is to calculate the first values of the sequence $\bar{N}_m$ and to use these to improve the estimations of the sequence $N_m$ obtained by Gateva-Ivanova and Cameron in \cite{GiC11} and by Lebed and Vendramin in \cite{lebed2017homology}. 

\noindent In the following examples we calculate the numbers $\bar{N}_k$ for $k\in \{1,2,3,4\}$.
\begin{ess}
\begin{itemize}
\item[1)] If $X:=\{1,2\}$ is the left cycle set given by $x\cdotp y:=y$ for all $x,y\in X$, then the permutation $\alpha:=(1\;2)$ is an automorphism of $X$. Moreover, $\sigma_{[0]}(1)=1\neq 2=\sigma_{[0]}(2)$, hence $\bar{N}_1=N_1=2$.
\item[2)] The unique square-free left cycle set of size $3$ and level $2$ is given by $\sigma_1=\sigma_2:=id_X$ and $\sigma_3:=(1\;2)$ and the group of automorphism is generated by $\sigma_3$. Since $\sigma_{[1]}(1)=\sigma_{[1]}(2)$, necessarily $\bar{N}_2>3$. Now, let $X:=\{1,2,3,4\}$ be the left cycle set given by 
$$\sigma_1=\sigma_2:=(3\;4)\qquad\sigma_3=\sigma_4:=(1\;2).$$
Then, $X$ has level $2$, $\alpha:=(1\;3)(2\;4)$ is an automorphism of $X$ and $\sigma_{[1]}(1)\neq \sigma_{[1]}(3)$, hence $\bar{N}_2=4$.
\item[3)] We know that $N_3=5$ and, by calculation, there are two left cycle sets of level $3$ and size $5$. The first one is given by $X:=\{1,2,3,4,5\}$, $\sigma_1=\sigma_2:=(3\;4)$, $\sigma_3=\sigma_4:=(1\;2)$ and $\sigma_5:=(1\;3\;2\;4)$. So, the fibers of $\sigma_{[2]}$ are $\sigma_{[2]}^{-1}(1)=\{1,2,3,4\}$ and $\sigma_{[2]}^{-1}(5)=\{5\}$, but an automorphism that maps $5$ to an other element of $X$ can not exists because there are not left multiplications of $X$ conjugate to $\sigma_5$. Using the same argument for the other left cycle set of level $3$ and size $5$, we prove that $\bar{N}_3>5$.
Let $X:=\{1,2,3,4,5,6\}$ be the left cycle set given by 
$$\sigma_1=\sigma_2:=id_X\qquad \sigma_3:=(5\;6)$$
$$\sigma_4:=(1\;2)(5\;6)\qquad\sigma_5:=(3\;4)\qquad \sigma_6:=(1\;2)(3\;4).$$
Then $X$ has level $3$, $\sigma_{[2]}(3)\neq \sigma_{[2]}(5)$ and $\alpha:=(3\;5)(4\;6)\in Aut(X)$, hence $\bar{N}_3= 6$.
\item[4)] By calculation, we know that $\bar{N}_4$ must be greater than $7$. Let $X:=\{1,2,3,4,5,6,7,8\}$ be the left cycle set given by 
$$\sigma_1=\sigma_2:=(7\;8)\qquad \sigma_3:=(5\;6)\qquad \sigma_4:=(1\;7)(2\;8)(5\;6)$$
$$\sigma_5:=(3\;4)\qquad \sigma_6:=(1\;7)(2\;8)(5\;6)\qquad \sigma_7=\sigma_8:=(1\;2).$$
Then $X$ has level $4$, $\sigma_{[3]}(3)\neq \sigma_{[3]}(5)$ and $\alpha:=(3\;5)(4\;6)$ is an automorphism of $X$, hence $\bar{N}_4=8$. 
\end{itemize}
\end{ess}

Unfortunately, for $k>4$ we are not able to calculate the precise value of $\bar{N}_k$. However, the following example allow us to give an estimation of $\bar{N}_5$.

\begin{es}
Let $X:=\{1,2,3,4,5,6,7,8,9,10\}$ be the left cycle set given by 
$$\sigma_1=\sigma_2:=id_X\quad \sigma_3:=(5\;6)\quad \sigma_4:=(1\;2)(5\;6)\quad \sigma_5:=(3\;4)\quad \sigma_6:=(1\;2)(3\;4)$$
$$\sigma_7:=(9\;10)\qquad \sigma_8:=(3\;5)(4\;6)(9\;10)\qquad \sigma_9:=(7\;8)\qquad \sigma_{10}:=(3\;5)(4\;6)(7\;8).$$ 
Then $X$ has level $5$, $\sigma_{[4]}(7)\neq \sigma_{[4]}(9)$ and $\alpha:=(7\;9)(8\;10)\in Aut(X)$, hence $\bar{N}_5\leq 10$.
\end{es}

\noindent By (\ref{disugprinci}), since $\bar{N}_5\leq 10$, it follows that
\begin{equation}\label{nostra}
N_m\leq 2^{m-2}-6\cdotp 2^{m-6}+1
\end{equation}
for every $m>5$. This improve the estimation $N_m\leq 2^{m-1}+1$, for every $m\in \mathbb{N}$, due to Gateva-Ivanova and Cameron \cite{GiC11}, and the estimation 
\begin{equation}\label{vend}
N_m\leq 2^{m-2}
\end{equation}
for every $m>4$, implicitly obtained by Lebed and Vendramin \cite{lebed2017homology}. For example, by \eqref{vend} we have that $N_6\leq 16$ while 
 using \eqref{nostra} we obtain that $N_6\leq 11$. If we want to estimate $N_7$, even if we use the better estimation of $N_6$ obtained in the previous step and even if we use instead of \eqref{vend} the  inequality $N_{m+1}\leq 2 N_m$ we obtain $N_7\leq 22$ while, by \eqref{nostra}, we have that $N_7\leq 21$.
 Using a similar argument one can see that the estimation \eqref{nostra} is actually the best possible for every $m>5$ and  the knowledge of $\bar{N}_k$ for some $k>5$ could be useful to improve our estimation.

\section{Irretractable square-free left cycle sets}

The multipermutational left cycle sets constructed in the previous sections are obtained by a mixture between some dynamical extensions \cite{vendramin2016extensions} and a particular case of an extension-tool developed by Etingof, Schedler and Soloviev called \textit{one-sided extension} (for more details see \cite[Section 2]{etingof1998set}).\\
Using the same approach, in this section we find new examples of irretractable left cycle sets: in particular, according to \cite[Problem 19]{vendramin2018skew}, we obtain several counterexamples to the Gateva-Ivanova's Conjecture that are different from those obtained in \cite{cacsp2017,bachiller2015family}.\\

First of all, we recall the constructions of the irretractable left cycle sets obtained by Bachiller, Ced\'o, Jespers and Okni\'nski  \cite{bachiller2015family} and by the authors \cite{cacsp2017}.

\begin{prop}[\cite{bachiller2015family}, Theorem 3.3(a)-(b)]\label{irretr}
Let $A,B,I,\varphi_1$, $\varphi_2$ and $X(A,B,I)$ be as in Proposition \ref{constrBACH}. 
\begin{itemize}
\item[1)] $X(A,B,I)$ is square-free if ad only if $\varphi_1(0)=0$;
\item[2)] If $\varphi_1^{-1}(0)=\{0\}$ and $\varphi_2$ is injective, then $X(A,B,I)$ is irretractable.
\end{itemize}
\end{prop}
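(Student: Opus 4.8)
The plan is to dispatch the two claims separately: claim (1) is a one-line computation with the squaring map, while claim (2) reduces to showing that the retraction map is injective, after which a coordinate-by-coordinate argument finishes the job. Throughout I would use that $A$ is non-trivial (so $|A|>1$) and that $|I|>1$, as required in Proposition \ref{constrBACH}.

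For (1), I would evaluate $\mathfrak{q}$ on a generic element. Since the two factors share their $I$-coordinate, the first branch of the operation applies and
\[
(a,b,i)\cdotp(a,b,i)=(a,\,b-\varphi_1(a-a),\,i)=(a,\,b-\varphi_1(0),\,i).
\]
This equals $(a,b,i)$ for every $(a,b,i)$ if and only if $\varphi_1(0)=0$, which is exactly the stated equivalence.

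For (2), I would first note that $\varphi_1^{-1}(0)=\{0\}$ forces $\varphi_1(0)=0$, so by (1) the cycle set is in particular square-free. Since the map $X(A,B,I)\longrightarrow\sigma(X(A,B,I))$, $x\mapsto\sigma_x$, is always a surjective homomorphism of left cycle sets, it suffices to prove it is injective; then it is an isomorphism and $X(A,B,I)$ is irretractable. So I would assume $\sigma_{(a,b,i)}=\sigma_{(c,d,j)}$ and deduce $(a,b,i)=(c,d,j)$ by evaluating both permutations on suitably chosen points $(e,f,k)$. The crux — and the step I expect to be the main obstacle — is separating the $I$-coordinates. Evaluating on a point with third coordinate $i$, the left-hand permutation lands in the first branch and returns $(e,\,f-\varphi_1(a-e),\,i)$, whereas if $i\neq j$ the right-hand one lands in the second branch and returns $(e-\varphi_2(d),\,f,\,i)$. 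Comparing second coordinates would force $\varphi_1(a-e)=0$ for all $e\in A$; choosing $e\neq a$ (possible as $|A|>1$) gives $a-e\neq 0$, contradicting $\varphi_1^{-1}(0)=\{0\}$. Hence $i=j$.

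Once $i=j$ is established, the remaining coordinates should follow by the same testing technique. Evaluating on points with third coordinate $i$ yields $\varphi_1(a-e)=\varphi_1(c-e)$ for all $e$; taking $e=a$ and invoking $\varphi_1(0)=0$ together with $\varphi_1^{-1}(0)=\{0\}$ gives $a=c$. Finally, since $|I|>1$ there is some $k\neq i$, and evaluating on points with third coordinate $k$ gives $\varphi_2(b)=\varphi_2(d)$, so the injectivity of $\varphi_2$ yields $b=d$. Therefore $(a,b,i)=(c,d,j)$, the map $\sigma$ is injective, and the irretractability of $X(A,B,I)$ follows.
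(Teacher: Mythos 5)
The paper does not prove this proposition at all: it is quoted verbatim from Bachiller, Ced\'o, Jespers and Okni\'nski (\cite{bachiller2015family}, Theorem 3.3(a)-(b)), so there is no in-paper argument to compare against. Your direct verification is correct and complete: part (1) is exactly the evaluation of the squaring map $\mathfrak{q}$ on the diagonal, and for part (2) the reduction of irretractability to injectivity of $x\mapsto\sigma_x$ is legitimate (a bijective homomorphism of left cycle sets is an isomorphism), the case split on the $I$-coordinate uses the hypotheses $|A|>1$, $|I|>1$, $\varphi_1^{-1}(0)=\{0\}$ and the injectivity of $\varphi_2$ exactly where they are needed, and each coordinate comparison checks out against the defining operation of $X(A,B,I)$.
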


\begin{prop}[\cite{cacsp2017}, Proposition 3]\label{irrpri}
Let $A,B,I,\beta,\gamma$ be as in Proposition \ref{costrfinale2}, $|A|,|B|,|I|>1$. Then if:\newline
$1)$  $A\times A\times \{i\}\cap\beta^{-1}(\beta_{(a,a)}(i,-))\subseteq\{(k,k,i)| k\in A\}$ for all $i\in I$ and $a\in A$,\newline
$2)$ $\gamma$ is injective,\newline
the non-degenerate left cycle set $X(A,B,I,\beta,\gamma)$ is irretractable. 
\end{prop}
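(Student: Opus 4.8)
The plan is to show that the canonical epimorphism $\sigma\colon X(A,B,I,\beta,\gamma)\to\sigma\bigl(X(A,B,I,\beta,\gamma)\bigr)$ is injective. Since a bijective homomorphism of left cycle sets has homomorphic inverse and is therefore an isomorphism, injectivity of $\sigma$ means the equivalence relation defining the retraction is trivial, so $\sigma(X)\cong X$ and $X$ is irretractable. Concretely, I would take two elements $(a,b,i)$ and $(c,d,j)$ with $\sigma_{(a,b,i)}=\sigma_{(c,d,j)}$ and prove $(a,b,i)=(c,d,j)$, exploiting the piecewise definition \eqref{opteor}: on an input whose $I$-coordinate equals the $I$-coordinate of the acting element, the left multiplication alters only the $B$-coordinate through $\beta$, whereas on every other input it alters only the $A$-coordinate through $\gamma$. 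This dichotomy is precisely what lets the two hypotheses separate the three coordinates.

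First I would pin down the $I$-coordinate. Assuming toward a contradiction that $i\neq j$, I evaluate both left multiplications at an arbitrary input $(x,y,i)$. For $\sigma_{(a,b,i)}$ the first branch of \eqref{opteor} applies, so the $B$-coordinate becomes $\beta_{(a,x,i)}(y)$, while for $\sigma_{(c,d,j)}$ the second branch applies (as $i\neq j$) and the $B$-coordinate stays $y$. Comparing $B$-coordinates for all $y$ forces $\beta_{(a,x,i)}=\mathrm{id}_B$, and letting $x$ range over $A$ gives $\beta_{(a,x,i)}=\mathrm{id}_B=\beta_{(a,a,i)}$ for every $x\in A$. The first hypothesis, applied with this $i$ and this $a$, then says that any triple in $A\times A\times\{i\}$ lying in $\beta^{-1}(\beta_{(a,a,i)})$ must be diagonal; since every $(a,x,i)$ now qualifies, we would obtain $a=x$ for all $x\in A$, i.e. $A=\{a\}$, contradicting $|A|>1$. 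Hence $i=j$.

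With $i=j$ the remaining two coordinates split cleanly. Evaluating at inputs $(x,y,k)$ with $k\neq i$ (such $k$ exist because $|I|>1$), the second branch applies on both sides and the $A$-coordinates read $\gamma_b(x)$ and $\gamma_d(x)$; equality for all $x$ gives $\gamma_b=\gamma_d$, whence $b=d$ by the injectivity of $\gamma$ (the second hypothesis). Evaluating instead at inputs $(x,y,i)$ yields $\beta_{(a,x,i)}=\beta_{(c,x,i)}$ for every $x$, and specializing to $x=c$ gives $\beta_{(a,c,i)}=\beta_{(c,c,i)}$; the first hypothesis with base point $c$ then forces $(a,c,i)\in\{(k,k,i):k\in A\}$, i.e. $a=c$. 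Collecting these, $(a,b,i)=(c,d,j)$, so $\sigma$ is injective and $X(A,B,I,\beta,\gamma)$ is irretractable.

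I expect the main obstacle to be the correct reading and deployment of the first hypothesis: the compact notation $\beta^{-1}\bigl(\beta_{(a,a)}(i,-)\bigr)$ must be parsed as the $\beta$-preimage of the permutation $\beta_{(a,a,i)}\in Sym(B)$, and the content of the condition is exactly that, inside the slice $A\times A\times\{i\}$, the only triples whose $\beta$-value equals $\beta_{(a,a,i)}$ are the diagonal ones. Once that is understood, the rest is a routine case analysis on the two branches of \eqref{opteor}, together with careful bookkeeping of where the cardinality constraints $|A|,|I|>1$ are invoked; I do not anticipate difficulty there.
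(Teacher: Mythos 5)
Your argument is correct: the case analysis on the $I$-coordinate of the test element cleanly separates the three coordinates, hypothesis $1)$ is parsed and applied correctly (both to rule out $i\neq j$ via $|A|>1$ and to force $a=c$), and injectivity of $\gamma$ gives $b=d$. Note that the paper does not actually prove this proposition --- it is quoted from \cite{cacsp2017} without proof --- but your route (showing $\sigma_{(a,b,i)}=\sigma_{(c,d,j)}$ implies $(a,b,i)=(c,d,j)$ by evaluating left multiplications on suitably chosen inputs) is the standard one and is essentially the argument given in that reference.
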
 

In a similar way for the left cycle sets considered in Section $3$, we can easily obtain some automorphisms of the left cycle sets constructed by Bachiller, Ced\'o, Jespers and Okni\'nski. 

\begin{lemm}\label{automor}
Let $X(A,B,I)$ be the left cycle set of the Proposition \ref{constrBACH}, $m\in Sym(I)$ and let $\psi_m$ the function given by $\psi_m(a,b,i):=(a,b,m(i))$ for every $(a,b,i)\in X(A,B,I)$. Then, $\psi_m$ is an automorphism and the set $G:=\{\psi_m|m\in Sym(I)\}$ is a subgroup of $Aut(X(A,B,I))$ isomorphic to $Sym(I)$.
\end{lemm}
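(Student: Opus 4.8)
The plan is to verify directly that $\psi_m$ is a bijective homomorphism of left cycle sets, and then to check that $m \mapsto \psi_m$ is an injective group homomorphism $Sym(I) \to Aut(X(A,B,I))$ whose image is therefore a subgroup isomorphic to $Sym(I)$. The map $\psi_m$ is manifestly a bijection: its inverse is $\psi_{m^{-1}}$, since $\psi_{m^{-1}}(\psi_m(a,b,i)) = \psi_{m^{-1}}(a,b,m(i)) = (a,b,m^{-1}(m(i))) = (a,b,i)$, and symmetrically $\psi_m \psi_{m^{-1}} = \mathrm{id}$. So the only substantive point is that $\psi_m$ respects the operation $\cdotp$ defined in Proposition \ref{constrBACH}.

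To check the homomorphism property I would compute $\psi_m\big((a,b,i)\cdotp(c,d,j)\big)$ and $\psi_m(a,b,i)\cdotp\psi_m(c,d,j)$ and compare them, splitting into the two cases of the piecewise definition of $\cdotp$. The key observation is that $\psi_m$ only permutes the third coordinate, and the case distinction in the definition of $\cdotp$ depends on the third coordinates solely through whether $i = j$ or $i \neq j$. Since $m$ is a bijection, $i = j$ holds if and only if $m(i) = m(j)$, so applying $\psi_m$ to both sides lands in the same branch of the definition. In the branch $i = j$, the first two output coordinates are $(c, d - \varphi_1(a-c))$, which are untouched by $\psi_m$, and the third coordinate $i \cdotp j = i$ simply gets sent to $m(i)$, matching $m(i) \cdotp m(j) = m(i)$. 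In the branch $i \neq j$ the output $(c - \varphi_2(b), d)$ is again fixed in its first two coordinates by $\psi_m$, and the third coordinate behaves identically. Both cases therefore agree, establishing $\psi_m \in Aut(X(A,B,I))$.

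Finally I would verify that $m \mapsto \psi_m$ is a homomorphism of groups: $\psi_m \circ \psi_n (a,b,i) = \psi_m(a,b,n(i)) = (a,b,m(n(i))) = \psi_{mn}(a,b,i)$, so $\psi_m \psi_n = \psi_{mn}$. Injectivity follows because $\psi_m = \psi_n$ forces $m(i) = n(i)$ for every $i \in I$, hence $m = n$; here the hypothesis $|I| > 1$ is not even needed for injectivity, though it guarantees $Sym(I)$ is nontrivial so the statement is not vacuous. Consequently $G = \{\psi_m \mid m \in Sym(I)\}$ is the image of an injective group homomorphism and is therefore a subgroup of $Aut(X(A,B,I))$ isomorphic to $Sym(I)$.

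I do not expect any genuine obstacle here: the entire content is the case analysis in the homomorphism check, and the crucial structural fact making it routine is that $\psi_m$ acts only on the index coordinate while the branch selection in $\cdotp$ depends on that coordinate only up to equality, which a bijection preserves. The one point worth stating carefully, rather than a difficulty, is the equivalence $i = j \iff m(i) = m(j)$ that lets the two computations stay in matching branches.
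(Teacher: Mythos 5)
Your proposal is correct and follows essentially the same route as the paper: the paper's proof also rests on the observation that $i=j$ if and only if $m(i)=m(j)$, then checks the homomorphism identity by direct computation (written with Kronecker deltas rather than an explicit case split) and leaves the verification that $m\mapsto\psi_m$ is an injective group homomorphism as a straightforward calculation, which you have simply carried out in full.
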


\begin{proof}
Clearly $\psi_m$ is bijective. Moreover, since $i=j$ if and only if $m(i)=m(j)$, we have
\begin{eqnarray}
\psi_m(a,b,i)\cdotp \psi_m(c,d,j) &=& (a,b,m(i))\cdotp (c,d,m(j))\nonumber \\
&=& (c+(1-\delta_{m(i),m(j)})\varphi_2(b),d+\delta_{m(i),m(j)}\varphi_1(a-c),m(j)) \nonumber \\
&=& \psi_m((c+(1-\delta_{i,j})\varphi_2(b),d+\delta_{i,j}\varphi_1(a-c),j)) \nonumber \\
&=& \psi_m((a,b,i)\cdotp (c,d,j)) \nonumber 
\end{eqnarray}
for all $(a,b,i),(c,d,j)\in A\times B\times I$, where $\delta_{k,l}=1$ if $k=l$ and $\delta_{k,l}=0$ otherwise. The rest of the proof is a straightforward calculation.
\end{proof}

From now on, for the left cycle set $X(A,B,I)$, we will indicate by $\psi_m$ the automorphism associated as in the previous Lemma, for every $m\in Sym(I)$.

\begin{prop}\label{uirr}
Let $X(A,B,I)$ be the irretractable left cycle set of the Proposition \ref{irretr}, $(Y,\cdotp)$ a left cycle set and $\alpha:Y\longrightarrow Aut(X(A,B,I))$ an injective function such that $\alpha(Y)\subseteq G$, where $G$ is the subgroup of $Aut(X(A,B,I))$ of the previous Lemma. Moreover, suppose that $\alpha(a\cdotp b)\alpha(a)=\alpha(b\cdotp a)\alpha(b)$ for all $a,b\in Y$ and let $(X(A,B,I)\cup Y,\circ)$ be the algebraic structure given by
$$
x\circ y:=\begin{cases}x\cdotp y & \mbox{if }x,y\in X(A,B,I)\mbox{ } \\y & \mbox{if }y=z\mbox{ } \\
	 \alpha(y) & \mbox{if }y\in X(A,B,I),x=z.\mbox{ }
	\end{cases}
$$
Then, the pair $(X(A,B,I)\cup Y)^{\circ}:=(X(A,B,I)\cup Y,\circ)$ is an irretractable left cycle set.
\end{prop}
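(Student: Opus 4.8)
The plan is to proceed in two stages: first verify that $(X(A,B,I)\cup Y)^{\circ}$ is a left cycle set, then prove that it is irretractable. For the first stage I would appeal to the one-sided extension machinery of Etingof, Schedler and Soloviev \cite[Section 2]{etingof1998set}, exactly as in Proposition \ref{perteo}. Here, however, the ``new'' part $Y$ is not a single point but itself a left cycle set glued on top, with the action on $X(A,B,I)$ mediated by $\alpha$. The compatibility condition that makes such a glued structure satisfy the cycle set identity (\ref{cicloide}) is precisely the hypothesis $\alpha(a\cdotp b)\alpha(a)=\alpha(b\cdotp a)\alpha(b)$ for all $a,b\in Y$: this is the ``cocycle-type'' relation guaranteeing that the left multiplications $\sigma_a$ for $a\in Y$ act coherently. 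I would check the three cases of the defining piecewise operation $\circ$ (both arguments in $X(A,B,I)$, both in $Y$, or mixed) against identity (\ref{cicloide}), using that each $\alpha(y)\in G\cong Sym(I)$ is genuinely an automorphism of $X(A,B,I)$ by Lemma \ref{automor}, so it commutes appropriately with the multiplications $\sigma_x$ on the $X(A,B,I)$ side.

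For the second and more substantial stage, irretractability, I would show that the retraction map $\sigma$ is injective on $(X(A,B,I)\cup Y)^{\circ}$, i.e. distinct elements induce distinct left multiplications. There are three types of pairs to separate. First, two elements $x,x'\in X(A,B,I)$: since $X(A,B,I)$ is itself irretractable by Proposition \ref{irretr} (using $\varphi_1^{-1}(0)=\{0\}$ and $\varphi_2$ injective), their multiplications already differ when restricted to $X(A,B,I)$, and the glued operation does not collapse this because $\sigma_x$ fixes each element of $Y$ (as $x\circ y=y$ for $y\in Y$). Second, two elements $y,y'\in Y$: their left multiplications restricted to $X(A,B,I)$ are $\alpha(y)$ and $\alpha(y')$ respectively, and these are distinct precisely because $\alpha$ is \emph{injective}, which is the reason that hypothesis is imposed. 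Third, a mixed pair $x\in X(A,B,I)$ and $y\in Y$: here $\sigma_x$ fixes $Y$ pointwise whereas $\sigma_y=\alpha(y)$ permutes $X(A,B,I)$ via some nontrivial $\psi_m$; comparing the two on a suitable element separates them.

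The step I expect to be the main obstacle is the verification of the cycle set identity in stage one, specifically the mixed cases where one argument lies in $X(A,B,I)$ and one in $Y$. Tracking which branch of the piecewise definition of $\circ$ is taken on each of the four products appearing in (\ref{cicloide}) requires care, and it is exactly there that the relation $\alpha(a\cdotp b)\alpha(a)=\alpha(b\cdotp a)\alpha(b)$ must be invoked to reconcile the two sides; getting the bookkeeping right (and confirming that no case forces a collision that would violate injectivity of $\sigma$) is the delicate part. The irretractability argument, by contrast, is essentially a clean case analysis once one observes that elements of $Y$ act as permutations of $X(A,B,I)$ while fixing $Y$, and elements of $X(A,B,I)$ do the opposite, so the two families are structurally distinguishable and the injectivity of $\alpha$ together with the irretractability of $X(A,B,I)$ finishes the argument.
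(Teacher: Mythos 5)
Your overall strategy coincides with the paper's: cite the one-sided extension of Etingof--Schedler--Soloviev for the cycle set structure (the condition $\alpha(a\cdot b)\alpha(a)=\alpha(b\cdot a)\alpha(b)$ is indeed exactly the compatibility needed there, so your ``main obstacle'' is really just a citation), and then prove irretractability by a three-way case analysis. The first two cases are handled correctly: irretractability of $X(A,B,I)$ separates two elements of $X(A,B,I)$, and injectivity of $\alpha$ separates two elements of $Y$.

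There is, however, a genuine gap in your mixed case. You write that $\sigma_y=\alpha(y)$ ``permutes $X(A,B,I)$ via some nontrivial $\psi_m$,'' but nothing in the hypotheses forces $\alpha(y)$ to be nontrivial: $\alpha$ is injective as a map into $Aut(X(A,B,I))$, yet a single value $\alpha(y)$ may well be $\psi_{id_I}=id_{X(A,B,I)}$ --- and this is not a pathological corner case, it is precisely the situation exploited in Example \ref{noveuno}, where $Y=\{m\}$ with $m=id_I$. In that situation $\sigma_y$ fixes $X(A,B,I)$ pointwise and (since $Y$ is a singleton) fixes $Y$ pointwise as well, so your claim that the two families are ``structurally distinguishable'' by which part they move breaks down: you must rule out the possibility that some $x=(a,b,i)\in X(A,B,I)$ also has $\sigma_x=id$. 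The paper does this by splitting the mixed case: if $\alpha(y)\neq id_I$ then $\sigma_y$ moves some fiber $A\times B\times\{j\}$ to $A\times B\times\{\alpha(y)(j)\}$ while $\sigma_x$ preserves every such fiber, so they differ; if $\alpha(y)=id_I$ and $\sigma_x=\sigma_y$, then $\sigma_{(a,b,i)}$ would fix every $(c,d,j)$, forcing $\sigma_{(a,b,i)}=\sigma_{(c,b,i)}$ for all $c\in A$ and contradicting the irretractability of $X(A,B,I)$. Your proof needs this extra step (or an equivalent use of irretractability of $X(A,B,I)$ to exclude $\sigma_x=id$ on $X(A,B,I)$) to be complete.
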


\begin{proof}
By \cite[Section 2]{etingof1998set}, we have that $(X(A,B,I)\cup Y)^{\circ}$ is a left cycle set, so it is sufficient to show that $\sigma_x\neq \sigma_y$ for every $x\neq y$. If $x,y\in X(A,B,I)$ or $x,y\in Y$ then clearly $\sigma_x\neq \sigma_y$. Now, suppose that $x:=(a,b,i)\in X(A,B,I)$ and $y\in Y$. If $\alpha(y)\neq id_I$, since $\sigma_x(A\times B\times \{j\})=A\times B\times \{j\}$ and $\sigma_y(A\times B\times \{j\})=A\times B\times \{\alpha(y)(j)\}$ for every $j\in I$, then necessarily $\sigma_x\neq \sigma_y$. Finally, if $\alpha(y)=id_I$ and $\sigma_x=\sigma_y$, then $\sigma_x(c,d,j)=(c,d,j)$ for all $c\in A,d\in B,j\in I$ and this implies $\sigma_{(a,b,i)}=\sigma_{(c,b,i)}$ for every $c\in A$, in contraddiction with the irretractability of $X(A,B,I)$.
\end{proof}

\begin{es}\label{noveuno}
Let $X(A,B,I)$ be the irretractable square-free left cycle set having $8$ element, $m:=id_I$, $Y:=\{m\}$ the left cycle set having $1$ element and $\alpha:Y\longrightarrow Aut(X(A,B,I))$ the function given by $\alpha(m):=\psi_m$. By the previous Proposition, $(X(A,B,I)\cup Y)^{\circ}$ is an irretractable left cycle set; moreover, since $X(A,B,I)$ is square-free, we have that $(X(A,B,I)\cup Y)^{\circ}$ is square-free. Therefore, $(X(A,B,I)\cup Y)^{\circ}$ is a counterexample of the Gateva-Ivanova's Conjecture of cardinality $9$.
\end{es}

\begin{es}\label{novedue}
Let $X(A,B,I)$ be the irretractable square-free left cycle set having $8$ element, $m:=(1\;2)$, $Y:=\{m\}$ the left cycle set having $1$ element and $\alpha:Y\longrightarrow Aut(X(A,B,I))$ the function given by $\alpha(m):=\psi_m$. By Proposition \ref{uirr}, $(X(A,B,I)\cup Y)^{\circ}$ is an irretractable left cycle set; moreover, since $X(A,B,I)$ is square-free, we have that $(X(A,B,I)\cup Y)^{\circ}$ is square-free. Therefore, $(X(A,B,I)\cup Y)^{\circ}$ is a counterexample of the Gateva-Ivanova's Conjecture of cardinality $9$. Since $\sigma_{x}\neq id_{X(A,B,I)\cup Y}$ for every $x\in X(A,B,I)\cup Y$, this left cycle set can not be isomorphic to the previous one.
\end{es}

Finding automorphism of the square-free left cycle set having $8$ element different from those obtained in Lemma \ref{automor} can be useful to find other counterexamples to the Gateva-Ivanova's Conjecture of size $9$, as we can see in the following example.

\begin{es}
Let $X(A,B,I)$ be the irretractable square-free left cycle set having $8$ element and $f:X(A,B,I)\longrightarrow X(A,B,I)$ the function given by $f(a,b,i):=(a+1,b,i)$ for every $(a,b,i)\in A\times B\times I$. Then, $f\in Aut(X(A,B,I))$: indeed $f$ is clearly bijective and
\begin{eqnarray}
f(a,b,i)\cdotp f(c,d,j) &=& (a+1,b,i)\cdotp (c+1,d,j) \nonumber \\
&=& (c+1+(1-\delta_{i,j})\varphi_2(b),d+\delta_{i,j}\varphi_1(a-c),j) \nonumber \\
&=& f(c+(1-\delta_{i,j})\varphi_2(b),d+\delta_{i,j}\varphi_1(a-c),j) \nonumber \\
&=& f((a,b,i)\cdotp(c,d,j)) \nonumber 
\end{eqnarray}
for all $(a,b,i),(c,d,j)\in A\times B\times I$. Therefore, $(X(A,B,I)\cup Y)^{\circ}$ is a counterexample of the Gateva-Ivanova's conjecture of cardinality $9$, where $Y:=\{w\}$ is a left cycle set of size $1$ and $\alpha(w):=f$. Since $\sigma_{x}\neq id_{X(A,B,I)\cup Y}$ for every $x\in X(A,B,I)\cup Y$, this left cycle set can not be isomorphic to the one in Example \ref{noveuno}. Moreover, this left cycle set can not be isomorphic to the previous one: indeed, the previous example has two orbits respect to the associated permutation group, while this left cycle set has three orbits.
\end{es}

However, Proposition \ref{uirr} allow us to obtain many other irretractable left cycle sets arbitrarily large, as we can see in the following examples.

\begin{es}
Let $p$ be an odd prime and $k\in \mathbb{N}$ such that $p^2=4k+1$. Moreover, let $A=B:=\mathbb{Z}/2\mathbb{Z}$, $I:=\{1,...,k\}$, $\varphi_1=\varphi_2=id_A$ and consider the irretractable left cycle set $X(A,B,I)$ as in Proposition \ref{irretr}. Moreover, let $m\in Sym(I)$, $Y:=\{m\}$ the left cycle set of $1$ element and $\alpha:Y\longrightarrow Aut(X(A,B,I))$ given by $\alpha(m):=\psi_m$. Then, $(X(A,B,I)\cup Y)^{\circ}$ is an irretractable square-free left cycle set having $p^2$ elements.
\end{es}

\begin{es}
Let $p$ be a prime number such that $p\equiv 1(mod\;4)$ and $k\in \mathbb{N}$ such that $3p=4k+3$. Moreover, let $A=B:=\mathbb{Z}/2\mathbb{Z}$, $I:=\{1,...,k\}$, $\varphi_1=\varphi_2=id_A$ and consider the irretractable left cycle set $X(A,B,I)$ as in Proposition \ref{irretr}. Moreover, let $m_1:=id_I$, $m_2:=id_I$ and $m_3:=(1\;2)$ and $Y:=\{m_1,m_2,m_3\}$ the left cycle set given by $\sigma_{m_1}=\sigma_{m_2}:=id_Y$ and $\sigma_{m_3}:=(m_1\;m_2)$. Set $\alpha(m_1)=\alpha(m_2):=id_{X(A,B,I)}$ and $\alpha(m_3):=\psi_{m_3}$. Then, $(X(A,B,I)\cup Y,\circ)$, where $\circ$ is the binary operation defined as in Proposition \ref{uirr}, is a square-free left cycle set having $3p$ elements. It is not irretractable since $\sigma_{m_1}=\sigma_{m_2}=id_{X(A,B,I)\cup Y}$.
\end{es}

If we consider the left cycle set $X(A,B,I,\beta,\gamma)$ of Proposition \ref{costrfinale2}, we can easily generalize the argument of Proposition \ref{uirr}.

\begin{lemm}
Let $X(A,B,I,\beta,\gamma)$ be the left cycle set of Theorem \ref{costrfinale2}, $m\in Aut(I)$ such that $\beta_{(a,b)}(i,-)=\beta_{(a,b)}(m(i),-)$ for every $(a,b,i)\in A\times B\times I$ and let $\psi_m$ be the function given by $\psi_m(a,b,i):=(a,b,m(i))$ for every $(a,b,i)\in X(A,B,I)$. Then, $\psi_m$ is an automorphism of $X(A,B,I,\beta,\gamma)$ and the set $G:=\{\psi_m|\psi_m\in Aut(X(A,B,I,\beta,\gamma))\}$ is a subgroup of $Aut(X(A,B,I,\beta,\gamma))$ isomorphic to a subgroup of $Aut(I)$.
\end{lemm}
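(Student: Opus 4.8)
The plan is to follow the pattern of the proof of Lemma \ref{automor}, of which this statement is the natural generalization: first verify that $\psi_m$ is an automorphism, and then package the assignment $m\mapsto\psi_m$ into a group homomorphism. Since $m\in Aut(I)\subseteq Sym(I)$ is a bijection of $I$, the map $\psi_m$ is clearly a bijection of $A\times B\times I$ with inverse $\psi_{m^{-1}}$, so the only thing to check is the homomorphism property $\psi_m(x\cdotp y)=\psi_m(x)\cdotp\psi_m(y)$. Writing $x=(a,b,i)$ and $y=(c,d,j)$, this splits, according to the defining formula (\ref{opteor}), into the two cases $i=j$ and $i\neq j$.

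For the diagonal case $i=j$ I would compute $x\cdotp y=(c,\beta_{(a,c,i)}(d),i\cdotp j)$, apply $\psi_m$, and compare with $\psi_m(x)\cdotp\psi_m(y)=(a,b,m(i))\cdotp(c,d,m(j))$. Three facts are needed here: that $m(i)=m(j)$, so that the right-hand product is again computed by the diagonal branch (this holds since $m$ is injective); that $m(i\cdotp j)=m(i)\cdotp m(j)$, which is precisely the assertion that $m$ respects the cycle-set operation of $I$, i.e. $m\in Aut(I)$; and that the middle coordinate matches, i.e. $\beta_{(a,c,m(i))}(d)=\beta_{(a,c,i)}(d)$, which is exactly the standing invariance hypothesis on $\beta$. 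For the off-diagonal case $i\neq j$, injectivity of $m$ gives $m(i)\neq m(j)$, so the right-hand product is computed by the second branch and equals $(\gamma_b(c),d,m(i)\cdotp m(j))$; since $\psi_m$ leaves the $A$- and $B$-coordinates untouched, the term $\gamma_b(c)$ is unchanged, and $m(i)\cdotp m(j)=m(i\cdotp j)$ again by $m\in Aut(I)$, so the two sides agree.

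Having established that every admissible $\psi_m$ lies in $Aut(X(A,B,I,\beta,\gamma))$, for the group statement I would observe that the assignment $\Psi:m\mapsto\psi_m$ satisfies $\psi_m\circ\psi_n=\psi_{m\circ n}$ (a one-line computation on the $I$-coordinate) and is injective, hence is an injective homomorphism from $Aut(I)$ into $Sym(A\times B\times I)$. Writing $H:=\{m\in Aut(I)\mid \psi_m\in Aut(X(A,B,I,\beta,\gamma))\}$, the identities $\psi_{m\circ n}=\psi_m\circ\psi_n$ and $\psi_{m^{-1}}=\psi_m^{-1}$ show that $H$ is closed under composition and inversion and so is a subgroup of $Aut(I)$; consequently $G=\Psi(H)$ is a subgroup of $Aut(X(A,B,I,\beta,\gamma))$, and the restriction $\Psi|_H:H\to G$ is an isomorphism, exhibiting $G$ as isomorphic to the subgroup $H$ of $Aut(I)$.

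The main obstacle, such as it is, is purely bookkeeping: matching the two-branch case analysis of (\ref{opteor}) correctly and invoking the hypothesis on $\beta$ in the right coordinate. No new idea beyond the verification in Lemma \ref{automor} is required. The only genuinely new ingredient is that the general construction of Proposition \ref{costrfinale2} carries a nontrivial cycle-set structure on $I$ (whereas in $X(A,B,I)$ the $I$-coordinate of a product is simply $j$): this is exactly what forces the invariance condition on $\beta$ and what upgrades the relevant permutations of $I$ from $Sym(I)$ to $Aut(I)$.
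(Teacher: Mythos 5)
Your proposal is correct and follows exactly the route the paper intends: the paper omits this proof, stating only that it is analogous to Lemma \ref{automor}, and your two-case verification (using injectivity of $m$ to preserve the branch of \eqref{opteor}, $m\in Aut(I)$ for the third coordinate, and the invariance hypothesis on $\beta$ for the middle coordinate in the diagonal case), together with the observation that $m\mapsto\psi_m$ is an injective homomorphism on the subgroup $H=\{m\in Aut(I)\mid\psi_m\in Aut(X(A,B,I,\beta,\gamma))\}$, is precisely that analogous argument. No discrepancy with the paper's approach.
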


\begin{prop}\label{uirr2}
Let $X(A,B,I,\beta,\gamma)$ be the irretractable left cycle set of Proposition \ref{irrpri}, $(Y,\cdotp)$ a left cycle set and $\alpha:Y\longrightarrow Aut(X(A,B,I,\beta,\gamma))$ an injective function such that $\alpha(Y)\subseteq G$, where $G$ is the subgroup of $Aut(X(A,B,I,\beta,\gamma))$ of the previous Lemma. Moreover, suppose that $\alpha(a\cdotp b)\alpha(a)=\alpha(b\cdotp a)\alpha(b)$ for all $a,b\in Y$ and let $(X(A,B,I,\beta,\gamma)\cup Y,\circ)$ be the algebraic structure given by
$$
x\circ y:=\begin{cases}x\cdotp y & \mbox{if }x,y\in X(A,B,I,\beta,\gamma)\mbox{ } \\y & \mbox{if }y=z\mbox{ } \\
	 \alpha(y) & \mbox{if }y\in X(A,B,I,\beta,\gamma),x=z.\mbox{ }
	\end{cases}
$$
Then, the pair $(X(A,B,I,\beta,\gamma)\cup Y,\circ)$ is an irretractable left cycle set.
\end{prop}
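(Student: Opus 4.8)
The plan is to mirror the proof of Proposition~\ref{uirr} as closely as possible, isolating the one place where the richer multiplication of Proposition~\ref{costrfinale2} forces a genuinely different argument. First I would establish that $(X(A,B,I,\beta,\gamma)\cup Y,\circ)$ is a left cycle set by appealing to the one-sided extension of Etingof, Schedler and Soloviev \cite[Section 2]{etingof1998set}. The point is that the hypothesis $\alpha(a\cdotp b)\alpha(a)=\alpha(b\cdotp a)\alpha(b)$ for all $a,b\in Y$ is precisely the cocycle identity that makes the left cycle set axiom hold across the two blocks: in the mixed instance with two arguments in $Y$ and one in $X(A,B,I,\beta,\gamma)$, both sides of the axiom collapse, via the definition of $\circ$, to $\alpha(a\cdotp b)\alpha(a)$ and $\alpha(b\cdotp a)\alpha(b)$ applied to the remaining argument. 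Granting this, irretractability amounts to proving that $x\mapsto\sigma_x$ is injective, i.e. $\sigma_x\neq\sigma_y$ for all $x\neq y$.

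Next I would dispose of the two straightforward cases. Since $x\circ v=x\cdotp v$ lies in $X(A,B,I,\beta,\gamma)$ for $x,v\in X(A,B,I,\beta,\gamma)$, the restriction of $\sigma_x$ to $X(A,B,I,\beta,\gamma)$ is just the left multiplication of $x$ inside that cycle set; hence two distinct $x,y\in X(A,B,I,\beta,\gamma)$ give $\sigma_x\neq\sigma_y$ directly from the irretractability provided by Proposition~\ref{irrpri}. If instead $x,y\in Y$, then $\sigma_x$ and $\sigma_y$ restrict on $X(A,B,I,\beta,\gamma)$ to $\alpha(x)$ and $\alpha(y)$, which are distinct because $\alpha$ is injective.

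The delicate case is $x=(a,b,i)\in X(A,B,I,\beta,\gamma)$ and $y\in Y$ with $\alpha(y)=\psi_m$. In Proposition~\ref{uirr} one separated $\sigma_x$ from $\sigma_y$ by noting that $\sigma_x$ fixes each slice $A\times B\times\{j\}$ while $\psi_m$ permutes the slices; this breaks down here, since in the construction of Proposition~\ref{costrfinale2} the last coordinate of a product is $i\cdotp j$, so $\sigma_x$ itself moves slices. Instead I would assume $\sigma_x=\sigma_y$ and evaluate both maps on an element whose last coordinate equals $i$: for every $c\in A$ and $d\in B$ one has $\sigma_x(c,d,i)=(c,\beta_{(a,c,i)}(d),i\cdotp i)$, whereas $\sigma_y(c,d,i)=\psi_m(c,d,i)=(c,d,m(i))$. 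Comparing the middle coordinates forces $\beta_{(a,c,i)}(d)=d$ for all $d\in B$ and all $c\in A$, that is $\beta_{(a,c,i)}=id_B$ for every $c\in A$. In particular $\beta_{(a,c,i)}=\beta_{(a,a,i)}$ for all $c$, so every triple $(a,c,i)$ lies in $A\times A\times\{i\}\cap\beta^{-1}(\beta_{(a,a)}(i,-))$; since $|A|>1$ one may take $c\neq a$, contradicting condition~$1)$ of Proposition~\ref{irrpri}, which requires such a triple to satisfy $a=c$. Thus $\sigma_x\neq\sigma_y$, and the three cases together give injectivity of $x\mapsto\sigma_x$, i.e. irretractability.

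I expect this third case to be the main obstacle, exactly because the slice argument of Proposition~\ref{uirr} no longer applies. The natural fix, and the key step, is to stop distinguishing $\sigma_x$ from $\psi_m$ through the $I$-coordinate and instead to read off the middle ($B$-)coordinate, turning the equality $\sigma_x=\sigma_y$ into the strong constraint $\beta_{(a,c,i)}=id_B$ for all $c$; this is precisely what condition~$1)$ on $\beta$ was designed to exclude, so the contradiction is immediate once the computation is set up on arguments with last coordinate $i$.
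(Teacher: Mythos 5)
Your proof is correct. Note that the paper does not actually write out a proof of Proposition~\ref{uirr2}: it only remarks that the argument is ``similar'' to that of Proposition~\ref{uirr}, so the relevant comparison is with that earlier proof. Your first two cases (two elements of $X(A,B,I,\beta,\gamma)$, or two elements of $Y$) and the appeal to \cite[Section 2]{etingof1998set} for the cycle set structure match the earlier proof exactly. Where you genuinely depart is the mixed case, and you are right that a departure is needed: in Proposition~\ref{uirr} the separation of $\sigma_{(a,b,i)}$ from $\psi_m$ rests on $\sigma_{(a,b,i)}$ preserving each slice $A\times B\times\{j\}$, which fails for the operation of Proposition~\ref{costrfinale2} since the last coordinate of a product is $i\cdotp j$; a literal transcription of the earlier proof would therefore be incomplete. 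Your replacement --- evaluating both maps on arguments with last coordinate $i$, reading off the $B$-coordinate to get $\beta_{(a,c,i)}=id_B$ for all $c\in A$, and then invoking condition~1) of Proposition~\ref{irrpri} together with $|A|>1$ to reach a contradiction --- is sound, and has the added merit of treating the subcases $m=id_I$ and $m\neq id_I$ uniformly, whereas the earlier proof needed to split them. In short, you have supplied the one nontrivial adaptation that the paper's ``left to the reader'' actually requires.
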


We leave the proofs of the previous results to the reader because they are similar to the ones of Lemma  \ref{automor} and Proposition \ref{uirr}.\\

\bibliographystyle{elsart-num-sort}
\bibliography{Bibliography}

\end{document}